\newtheorem{theorem}{Theorem}[section]
\newtheorem{lemma}[theorem]{Lemma}
\newtheorem{claim}[theorem]{Claim}
\theoremstyle{definition}
\newtheorem{definition}[theorem]{Definition}
\theoremstyle{remark}
\newtheorem{remark}[theorem]{Remark}
\numberwithin{equation}{section}
\DeclareMathOperator{\area}{area}
\DeclareMathOperator{\diam}{diam}
\DeclareMathOperator{\dist}{dist}
\DeclareMathOperator{\inte}{int}
\DeclareMathOperator{\conv}{conv}
\DeclareMathOperator{\lk}{lk}
\DeclareMathOperator{\sgn}{sgn}
\renewcommand{\epsilon}{\varepsilon}
\renewcommand{\phi}{\varphi}
\newcounter{fig}
\title[Convex fair partitions into an arbitrary number of pieces]{Convex fair partitions into an arbitrary number of pieces}
\author{Arseniy~Akopyan{$^\spadesuit$}}
\author{Sergey Avvakumov{$^\clubsuit$}}
\author{Roman~Karasev{$^\diamondsuit$}}
\thanks{{$^\spadesuit$} This work was partially completed while A.~Akopyan was at IST Austria, supported by the European Research Council (ERC) under the European Union's Horizon 2020 research and innovation programme (grant agreement No 716117)}
\thanks{{$^\clubsuit$} Supported by the Austrian Science Fund (FWF), Project P31312-N35, and the European
Research Council under the European Union's Seventh Framework Programme ERC Grant agreement ERC
StG 716424 -- CASe}
\thanks{{$^\diamondsuit$} The research of R.~Karasev was carried out within the state assignment 1.1.1-0029/25 of Ministry of Science and Higher Education of the Russian Federation for IITP RAS}
\address{Arseniy Akopyan, FORA Capital, LLC, Miami, USA}
\email{akopjan@gmail.com}
\address{Sergey~Avvakumov, School of Mathematical Sciences, Tel Aviv University, Tel Aviv 69978, Israel}
\email{savvakumov@gmail.com}
\address{Roman~Karasev, Institute for Information Transmission Problems RAS, Bolshoy Karetny per. 19, Moscow, Russia 127994}
\email{r\_n\_karasev@mail.ru}
\urladdr{http://www.rkarasev.ru/en/}
\subjclass[2010]{51F99, 52C35, 55M20, 55M35}
\begin{document}

\begin{abstract}
We prove that any convex body in the plane can be partitioned into $m$ convex parts of equal areas and perimeters for any integer $m\ge 2$; this result was previously known for prime powers $m=p^k$. We also discuss possible higher-dimensional generalizations and difficulties of extending our technique to equalizing more than one non-additive function.
\end{abstract}

\maketitle

\section{Introduction}
	
In \cite{nandakumar2008} a very natural problem was posed: Given a positive integer $m$ and a convex body $K$ in the plane, cut it into $m$	convex pieces of equal areas and perimeters.

The case $m=2$ of the problem is done with a simple continuity argument. The case $m=2^k$ could be done similarly using the Borsuk--Ulam-type lemma by Gromov~\cite{gromov2003} (see also \cite[Theorems~2.1, 3.1]{klartag2016}), which was used to prove another result, the waist theorem for the Gaussian measure (and the sphere). In \cite{barblsz2010} the case $m=3$ was done.

Further cases, $m=p^k$ for a prime $p$, were established in \cite{ahk2014} and \cite{bz2014} independently (and a~similar but weaker fact was established in \cite{bespamyatnikh2000,soberon2012}). In both papers higher-dimensional analogues of the problem were stated and proved. This time we establish a new series of results:

\begin{theorem}
\label{theorem:main}
Any convex body $K\subset\mathbb R^2$ can be partitioned into~$m$ convex parts of equal area and perimeter, for any integer $m\ge 2$.
\end{theorem}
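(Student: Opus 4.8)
The plan is to use the standard reduction of the Nandakumar–Ramana-Sharma problem to an "equipartition into convex pieces of equal area" result combined with a topological fixed-point / non-embeddability argument, and then to push the latter through for all $m$ — not just prime powers. First, I would recall the classical fact (due to the work on optimal transport / generalized Voronoi partitions, as used in \cite{ahk2014} and \cite{bz2014}) that the space of ways to partition $K$ into $m$ convex pieces of equal area is naturally parametrized: to each point of a configuration space one associates a weighted-Voronoi (power diagram) partition, and this gives a continuous map from a finite-dimensional space onto the space of equal-area convex partitions. Concretely, one fixes $m$ ordered points in the plane together with $m$ real weights summing to zero, forms the power diagram, and rescales so that all cells meet $K$ in equal area; the relevant configuration space deformation-retracts onto the configuration space $F(\mathbb R^2, m)$ of $m$ distinct labeled points in the plane, or rather its quotient relevant to the symmetry of the problem.

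**The topological core.** On this configuration space one has a map recording the $m$-tuple of perimeters (or rather their deviations from the average), which lands in the subspace of $\mathbb R^m$ where coordinates sum to a fixed value; quotienting by the symmetric group $S_m$ action that permutes the pieces, the question becomes whether a certain $S_m$-equivariant map from (a compactification/blow-up of) $F(\mathbb R^2,m)$ to the representation sphere $S(W_m)$, where $W_m$ is the standard $(m-1)$-dimensional representation of $S_m$, can avoid a zero. Existence of a fair partition is equivalent to the nonexistence of such an equivariant map. For $m = p^k$ this is handled by restricting to a $\mathbb Z/p$-subgroup and invoking the fact that $F(\mathbb R^2, p^k)$ is a free $\mathbb Z/p$-space of the right connectivity, so Borsuk–Ulam-type ($\mathbb Z/p$-index) arguments apply. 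The obstacle for general $m$ is precisely that $S_m$ has no such convenient $p$-subgroup acting with the needed freeness and that the mod-$p$ index computations fail.

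**Getting past the prime-power barrier.** The new idea I would pursue is to replace the single-step power-diagram parametrization by an \emph{iterated} one: first partition $K$ into two (or a few) convex pieces with a prescribed area \emph{ratio} and with equal perimeters — a $2$-piece balancing that is elementary by a continuity/intermediate-value argument as noted for $m=2$ — and then recurse inside each piece. Writing $m = a + b$ with $a,b \geq 1$, one wants to cut $K$ by a single convex curve (in fact a line, or a piecewise-linear cut) into $K_a$ of area $\frac{a}{m}|K|$ and $K_b$ of area $\frac{b}{m}|K|$, with equal perimeters, then fair-partition $K_a$ into $a$ pieces and $K_b$ into $b$ pieces by induction. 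The difficulty — and where the real work lies — is that the inductive hypothesis must be strong enough: one cannot simply ask for a fair partition of an arbitrary convex body, because the "perimeter" of an inductively produced sub-body includes the cut, so the two sub-problems are coupled. The fix is to prove a \emph{parametrized} version: for a continuous family of convex bodies (or of convex bodies-with-a-marked-boundary-arc) the fair partition can be chosen continuously, or at least the relevant configuration space maps onto the parameter space with a section — which is exactly the kind of statement amenable to a degree or Euler-class argument that, crucially, does not see the prime-power obstruction because the group acting has been broken down into a tower of small pieces (each step involving only $S_a \times S_b \hookrightarrow S_m$ and a one-dimensional balancing).

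**Main obstacle.** The hardest point will be making the induction topologically robust: the one-cut lemma produces, for each body in a family, possibly a \emph{set} of valid cuts rather than a single one, so one is really dealing with a correspondence, and to recurse one needs this correspondence to admit a continuous selection over the parameter space — or, failing that, to carry the multivaluedness through the entire tower while still extracting a fixed point at the end. I expect this to require either a careful transversality/genericity argument to reduce to finitely many sheets, or an Euler-class computation for a vector bundle over the (contractible, or spherical) parameter space built from the balancing conditions at all nodes of the binary tree simultaneously — and verifying that this Euler class is nonzero, uniformly in $m$, is the crux of escaping the $m = p^k$ restriction.
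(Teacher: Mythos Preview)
Your instinct to recurse and to strengthen the inductive hypothesis to a \emph{parametrized} statement is exactly right, and you correctly identify that the multivaluedness of the ``achievable common perimeter'' is the central obstacle. But the specific scheme you outline has two genuine gaps.

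First, the additive split $m=a+b$ is the wrong recursion. If you cut $K$ into $K_a$ of area $\tfrac{a}{m}|K|$ and $K_b$ of area $\tfrac{b}{m}|K|$ and then fair-partition each side, you get $a$ pieces of some common perimeter $P_a$ and $b$ pieces of common perimeter $P_b$; the problem is to force $P_a=P_b$. With $a\neq b$ the two sub-problems are genuinely different and there is no symmetry relating the two multivalued ``achievable $P$'' functions, so your one-dimensional cut parameter cannot be leveraged by a Borsuk--Ulam or intersection argument. (Your remark ``with equal perimeters'' for $K_a,K_b$ themselves is a red herring: equal perimeters of the halves has nothing to do with equal perimeters of the final pieces.) The paper instead uses the \emph{multiplicative} split $m=p\cdot m'$: partition $C$ into $p$ convex pieces of equal area via the Blagojevi\'c--Ziegler pseudomanifold $P_p$, then recurse into each. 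Now all $p$ sub-problems are permuted by $\mathfrak S_p$, and one can run a mod-$p$ equivariant argument at this single step.

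Second, and more seriously, you are missing the device that makes the multivaluedness tractable through the whole tower. The paper formalizes the inductive hypothesis as a \emph{nice multivalued function} $\phi:\mathcal K\to(-1,1)$, meaning a closed graph $\{\phi(C,y)=0\}\subset\mathcal K\times[-1,1]$ cut out by a continuous $\phi$ with $\phi(C,-1)<0<\phi(C,1)$. The key lemma is: given such a $\phi$ and a prime $p$, the set $Z$ of pairs $(C,y)$ for which $C$ admits an equal-area $p$-partition with $\phi(C_i,y)=0$ for all $i$ \emph{separates} the top $\mathcal K\times\{1\}$ from the bottom $\mathcal K\times\{-1\}$. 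This separation is proved by restricting to an arbitrary path from bottom to top (a one-parameter family $C(s)$), where the mod-$p$ cycle structure of the solution set over $P_p$ forces a nontrivial intersection. Once you have separation, you simply take the signed distance to $Z$ to manufacture a new nice multivalued function $\psi$, and iterate over the prime factorization of $m$. Your proposed alternatives---continuous selection, or a global Euler-class computation over a binary tree of cuts---do not work: there is no continuous selection in general, and an Euler class over the full configuration space runs straight back into the prime-power obstruction you are trying to avoid. The separation-then-signed-distance trick is precisely what lets the argument use a \emph{different} prime $p$ at each stage without ever confronting the full $\mathfrak S_m$-equivariant topology.
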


As in the previous work \cite{ahk2014} and \cite{bz2014}, a ``perimeter'' here may mean any continuous function of a convex body in the plane. More precisely, this real-valued function must be defined on convex bodies (convex compacta with non-empty interior) continuously in the Hausdorff metric; in particular, in the course of our proof we never apply this function to a convex body with empty interior.

An ``area'' may be measured with any finite Borel measure with non-negative density (that is absolutely continuous with respect to the Lebesgue measure) in $K$; for a positive density the proof passes literally and the non-negative density is obtained with the standard compactness argument. 

Compared to the previous work on this and similar problems, this time we have found a way to go beyond the usual equivariant (co)homological argument that restricts the possible result to the prime power case. Our proof builds a solution recursively. To prove its validity we argue by induction and use a certain separation lemma that allows us to use the standard homological arguments modulo different primes at different stages of the induction.

While this paper was reviewed in a journal, we (two of the three current authors) have published a solution to another problem \cite{avvakumov2020} that may be viewed as a properly corrected case $d=1$ of Theorem~\ref{theorem:main-d}. The argument basically follows the lines of the argument in this paper with some modifications. The reader may compare this exposition to \cite{avvakumov2020}.

\subsection*{Organization of the paper}
In Section~\ref{subsection:cstm} we explain the standard scheme which gives a proof of Theorem~\ref{theorem:main} for $m$
a prime power. In Section~\ref{subsection:common} we recall some common facts about the transversality of equivariant maps of polyhedra. The reader who is comfortable with these notions may skip this section. In Section~\ref{subsection:pk} we list the properties of a certain polyhedron $P_p$ (constructed in \cite{bz2014}) which is going to be used in the subsequent proofs; we also show how this polyhedron is important for the Borsuk--Ulam type result needed to complete the proof scheme in Section~\ref{subsection:cstm}.

In Section~\ref{section:2pk} we give a proof of Theorem~\ref{theorem:main} for $m$ twice a prime power. This proof is shorter and more visual than the proof of the general case, but the subsequent proof of the general case does not rely on Section~\ref{section:2pk} and proves this particular case independently.

In Section~\ref{section:arbitrary} we state Lemma~\ref{lemma:function-to-function} and deduce Theorem~\ref{theorem:main} for all $m$ from it. In Section~\ref{section:lemma-proof} we prove the lemma, completing the proof of Theorem~\ref{theorem:main}.

In Appendix~\ref{section:bz-explanations} we briefly outline the construction of the polyhedron $P_p$ in \cite{bz2014}. In Appendix~\ref{section:alt-explanations} we provide an implicit proof of why there must exist a polyhedron $Q_p$ suitable for our purposes in place of $P_p$ independent of results of \cite{bz2014}.

In Appendices \ref{section:higher-pos} and \ref{section:higher-neg} we present a higher-dimensional result that does not fully generalize the two-dimensional case, and an explanation of the difficulties of applying our tools to the true higher-dimensional generalization of the two-dimensional problem when $m$ is not a prime power.

\subsection*{Acknowledgments} We thank Alfredo Hubard and Sergey Melikhov for helpful discussions on the problem, Pavle Blagojevi\'c, Peter Landweber, and the anonymous referees for numerous remarks that helped us improve the text.
	
\section{How the proof for $m=p^k$ works}
\label{section:bz}

\subsection{The configuration space --- test map scheme and the proof of the $m=p^k$ case}
\label{subsection:cstm}

Let $F_m(\mathbb R^2)$ be the configuration space of $m$-tuples $(x_1,\ldots, x_m)$ of pairwise distinct points in the plane. To every such $m$-tuple we uniquely associate (following~\cite{aurenhammer1998}) the weighted Voronoi partition of the plane,
\[
\mathbb R^2 = V_1\cup \dots \cup V_m,
\]
with centers at $x_1,\ldots,x_m$ such that the areas of the intersections $V_i\cap K$ are all equal. This can be done continuously in the configuration $F_m(\mathbb R^2)$ (see~\cite[Section~2]{ahk2014} and Theorem~\ref{theorem:closed-graph} below). Then we produce the map ($\phi$ is for perimeter here)
\[
F_m(\mathbb R^2) \to \mathbb R^m,\quad (x_1,\ldots,x_m) \mapsto \left( \phi(V_1\cap K),\ldots, \phi(V_m\cap K) \right),
\]
and then compose it with the linear quotient by the diagonal (i.e., taking the quotient we view $\mathbb R^m$ as a linear space and the diagonal as its linear subspace)
\[
\Delta = \{(t,t,\ldots, t) : t\in\mathbb R\} \subset\mathbb R^m
\]
to obtain an $\mathfrak S_m$-equivariant continuous map
\[
\tau : F_m(\mathbb R^2) \to \mathbb R^m/\Delta =: W_m,
\]
where $\mathfrak S_m$ is the group of permutations of $m$ elements. The vector space $W_m$ can be interpreted as the orthogonal complement to $\Delta$ in $\mathbb R^m$ and as a $(m-1)$-dimensional irreducible representation of the permutation group $\mathfrak S_m$. With the natural action of $\mathfrak S_m$ on $F_m(\mathbb R^2)$ this map $\tau$ becomes $\mathfrak S_m$-equivariant.

\begin{claim}
\label{claim:bu}
The equipartition problem is solved if the image of the $\mathfrak S_m$-equivariant map 
\[
\tau : F_m(\mathbb R^2) \to W_m
\]
contains $0$. That is, if $\tau(x)=0$ for some $m$-tuple $x\in F_m(\mathbb R^2)$ then the partition of the plane $\mathbb R^2 = V_1\cup \dots \cup V_m$ corresponding to $x$ yields a partition of $K$, $(V_1\cap K)\cup \dots \cup (V_m\cap K)$, into convex parts of equal areas and equal perimeters.
\end{claim}

The proof of the case $m=p^k$ is then done by applying a Borsuk--Ulam type result (essentially established in \cite{vassiliev1988} and explicitly stated in \cite[Theorem~1.10]{ahk2014}) proving that the image of $\tau$ contains $0$, see Lemma~\ref{lemma:bu}. The lemma is proved by first arguing that there is \emph{some} equivariant map whose preimage of $0$ is non-trivial in a certain homological sense, so-called \emph{test map}; and then deducing that \emph{any} equivariant map must have $0$ in its preimage.

When $m$ is not a prime power, however, the needed Borsuk--Ulam type result fails, as in this case there exist equivariant maps $F_m(\mathbb R^2) \to W_m$ not having $0$ in their image, see \cite{bz2014}.

\subsection{Common facts about the transversality of equivariant maps of polyhedra}
\label{subsection:common}

Here, we introduce several common definitions and statements that will be used in the rest of the paper.
We also recommend the textbooks~\cite{matousek2003using,prasolov2006,rs1972,brs1976} for general notions of PL topology. There are many definitions of a polyhedron, equivalent to each other. For our purposes it is sufficient to consider a polyhedron as a subset of the Euclidean space with certain structure.

\begin{definition}
A \emph{cellular decomposition} of a subset $P\subset \mathbb R^N$ is a finite system $\mathcal C$ of pairwise distinct convex polytopes in $\mathbb R^N$ such that

i) The union of $\mathcal C$ is $P$.

ii) For any $F\in\mathcal C$ all its polytopal faces are in $\mathcal C$.

iii) The intersection $F\cap G$ of any two $F,G\in\mathcal C$ is either empty or in $\mathcal C$ and is a polytopal face of both $F$ and $G$.

The members of $\mathcal C$ are called \emph{faces} of the cellular decomposition. If all faces are simplices then the cellular decomposition is called a \emph{triangulation}.
\end{definition}

\begin{definition}
A \emph{polyhedron} is a subset of $\mathbb R^N$ having a cellular decomposition.
\end{definition}

The same polyhedron $P$ may have different cellular decompositions. Whenever we assume a particular cellular decomposition of a polyhedron in a statement or a proof, we may omit mentioning the decomposition and write ``face of the polyhedron'' instead of ``face of the cellular decomposition'' or ``face of the triangulation''. 

\begin{definition}
A \emph{subpolyhedron} $Q$ of a cellular decomposition $\mathcal C$ of $P$ is the union of faces in some subset $\mathcal D\subseteq \mathcal C$. Without loss of generality we assume $\mathcal D$ closed under inclusion, that is, if $F\in \mathcal D$ then $F'\in \mathcal D$ for every face $F'\subset F$. This way $Q$ is itself a polyhedron.
\end{definition}

\begin{definition}
A cellular decomposition $\mathcal D$ of a polyhedron $P$ is a \emph{subdivision} of another cellular decomposition $\mathcal C$ if every face of $\mathcal C$ is a subpolyhedron with respect to $\mathcal D$.
\end{definition}

Note that if $Q\subset P$ is a subpolyhedron with respect to some decomposition of $P$ then it is also a subpolyhedron with respect to any of its subdivisions.

There is a way to produce a triangulation of a polyhedron from a cellular decomposition using \emph{barycentric subdivision}: Take a point $v(F)$ in the relative interior of every face $F$ of the decomposition and produce new simplicial faces as convex hulls of $v(F_0), \ldots, v(F_k)$ whenever $F_0\subset F_1\subset \dots \subset F_k$. If one chooses the points $v(F)$ as barycenters (centers of mass) of faces then applying barycentric subdivision sufficiently many times one obtains arbitrarily fine subdivision, that is, having all faces of diameter at most $\delta$ for any given $\delta>0$.

\begin{definition}
A map $f : P\to \mathbb R^d$ is \emph{piecewise-linear}, or PL, with respect to a cellular decomposition of $P$ when $f$ is affine on every face. 
\end{definition}

\begin{definition}
A PL map $f : P\to \mathbb R^d$ is \emph{transverse to zero} with respect to the cellular decomposition of $P$ when the affine map $f|_F$ has surjective derivative on every face $F\subseteq P$ such that $f(F)\ni 0$. 
\end{definition}

Although the PL and ``transverse to zero'' properties do depend on the choice of decomposition, we may omit mentioning the decomposition in relation to PL or transverse to zero map, whenever we assume a particular cellular decomposition of a polyhedron in a statement. 

The following properties are straightforward for a map $f : P\to\mathbb R^d$ transverse to zero:
\begin{itemize}
\item
Only faces of $P$ of dimension $d$ and greater can have $0$ in their image. 
\item
A restriction of $f$ to a subpolyhedron of $P$ is also transverse to zero.
\end{itemize}
Moreover, if the image of a PL map $f : P\to \mathbb R^d$ does not contain $0$ then $f$ is transverse to zero by definition. 

Given a topological space we are going to say that a \emph{generic} element of this space has a certain property if the set of elements with this property is open and everywhere dense.

\begin{lemma}
\label{lemma:generic}
For a triangulation of $P$, a generic PL map $f : P\to \mathbb R^d$, affine on faces of this triangulation, is transverse to zero.
\end{lemma}

\begin{proof}
A map affine on faces of the fixed triangulation is fully defined by its values on the vertices. In order to perturb such a map one only has to change its values on the vertices, that is, modify the map inside a space of such maps homeomorphic to $\mathbb R^{vd}$, where $v$ is the number of the vertices. 

Within such variety of maps the image of a face $F\subseteq P$ of dimension less than $d$ generically does not touch zero, that is, the variety of maps $f$ such that $f(F)\ni 0$ is a proper algebraic subset of $\mathbb R^{vd}$, which is closed and has zero measure. For any face $F\subseteq P$ of dimension at least $d$ the set of PL maps $f$ such that $f(F)\ni 0$ may have positive measure, but considering the derivative of the map $f|_F$ (its homogeneous linear part) one observes that the non-surjective derivatives also constitute a proper algebraic subset of $\mathbb R^{vd}$, which is closed and has zero measure.

Since the number of faces is finite, then the variety of maps $f$ that are not transverse to zero is a proper algebraic subset of $\mathbb R^{vd}$, closed with zero measure. 
\end{proof}

We will need extensions of these notions for \emph{$G$-equivariant maps}, where a finite group $G$ acts freely on $P$ and acts linearly on $\mathbb R^d$. The above common observations have the following equivariant extensions.

\begin{definition}
\label{definition:compatible-with}
Assume that a finite group $G$ acts on a polyhedron $P$ freely. A cellular decomposition of $P$ and the action of $G$ are \emph{compatible with each other} if for any $g\neq e\in G$ and any face $F\subseteq P$:
\begin{itemize}
\item $gF$ is a face,
\item $F\overset{g}{\longrightarrow} gF$ is an affine homeomorphism,
\item $gF\cap F=\emptyset$.
\end{itemize}
\end{definition}

The following lemma simplifies the practical work with this definition.\footnote{One may ask why we use such a definition of compatibility. Note that there may be different definitions, for example, a free PL action of $G$ and a decomposition that is invariant under it, weaker than ours as an example of a boundary of a triangle rotated by the group $\mathbb Z/3\mathbb Z$ shows. Or a free action of $G$ naturally endowing $P/G$ with a cellular decomposition, stronger than ours as an example of a crosspolytope $\conv\{\pm e_1,\ldots, \pm e_d\}$ with an involution $x\mapsto -x$ shows. So we use a definition that allows us to work with it.} A barycentric subdivision is called $G$-equivariant if the choice of new vertices satisfies $v(gF)=gv(F)$ for any face $F\subseteq P$ and any $g\in G$.

\begin{lemma}
\label{lemma:equivariant-compatible}
Assume that a cellular decomposition of $P$ and a free action of a finite group $G$ on $P$ satisfy the first two items of Definition~\ref{definition:compatible-with}. Then any $G$-equivariant barycentric subdivision is compatible with the action of $G$.
\end{lemma}
\begin{proof}
A $k$-dimensional face $E$ of the barycentric subdivision corresponds to a chain $F_0\subset F_1\subset\dots \subset F_k$ of faces of the original decomposition. Assume that, for some $g\neq e\in G$, $E$ intersects $gE$, that corresponds to the chain $gF_0\subset gF_1\subset\dots \subset gF_k$. This means that $E$ and $gE$ have a common vertex, which by the definition of the barycentric subdivision means that $F_i=g F_j$ for some $i$ and $j$. Since the dimensions in this equality must match, in fact we have $gF_i=F_i$. Then $g$ fixes a point of $F_i$ by the Brouwer fixed point theorem, contradicting the assumption that the action is free.

We have explained the third requirement of being compatible. The first two follow from $v(gF)=gv(F)$ and the fact that a new face $E$ is a simplex inside $F_k$ in the above notation.
\end{proof}

\begin{lemma}
\label{lemma:equivariant-generic}
Let a triangulation of a polyhedron $P$ be compatible with the action of a finite group $G$ and let $G$ act on $\mathbb R^d$ linearly. Then a generic PL with respect to this triangulation and $G$-equivariant map $f : P\to \mathbb R^d$ is transverse to zero.
\end{lemma}
\begin{proof}
We use an adjusted form of the non-equivariant argument in the proof of Lemma~\ref{lemma:generic}. For any orbit of vertices $\{Gv\}$ of $P$, we may prescribe $f(v)$ and extend this by equivariance $f(gv) = g f(v)$. Hence the space of all possible equivariant maps $f : P\to\mathbb R^d$ affine on faces of the given triangulation of $P$ is just $\mathbb R^{vd/|G|}$, where $v$ is the number of the vertices in the triangulation.

For any face $F\subseteq P$, its vertices belong to different $G$-orbits, and are therefore independent. It follows that the variety of $G$-equivariant $f$ such that $f(F)\ni 0$ and the derivative of $f|_F$ is non-surjective is a proper algebraic subset of $\mathbb R^{vd/|G|}$, closed with zero measure. Since the number of faces is finite, then the variety of $G$-equivariant $f$ that are not transverse to zero is a proper algebraic subset of $\mathbb R^{vd/|G|}$, closed with zero measure. 
\end{proof}

Let us show that continuous equivariant maps can be approximated by PL maps with respect to sufficiently fine triangulations. We use the Euclidean metric on $\mathbb R^d$ and assume that any polyhedron inherits the Euclidean metric from its containing Euclidean space. We only consider compact polyhedra, so it is convenient to approximate maps uniformly.

\begin{lemma}
\label{lemma:equivariant-approximation}
Suppose that a finite group $G$ acts freely on $P$ and acts linearly on $\mathbb R^d$. Let $f : P\to \mathbb R^d$ be a continuous $G$-equivariant map.
Then for any $\varepsilon > 0$ there exists $\delta >0$ such that for any triangulation of $P$ that is finer than $\delta$ and compatible with the action of $G$ there is a $G$-equivariant PL map affine on faces of this triangulation that approximates $f$ with error at most $\varepsilon$ and is transverse to zero.

If $f$ does not have $0$ in its image then any its sufficiently close approximation does not have $0$ in its image. 
\end{lemma}
\begin{proof}
By the uniform continuity of $f$ (that follows from compactness of $P$) we find $\delta>0$ such that $\diam f(S) < \epsilon/3$ for any subset $S\subseteq P$ of diameter $\diam S < \delta$. Assume that we have a compatible with the action of $G$ triangulation of $P$ with all faces of diameter less than $\delta$. Modify $f$ so that it remains the same on the vertices of the triangulation and is extended to each face affinely. This new PL map differs from the original $f$ by less than $2\epsilon/3$. Then generically perturb the obtained map using Lemma~\ref{lemma:generic} to make it transverse to zero. 
 
To prove the second claim we use compactness of $P$ and choose $\varepsilon$ smaller than the distance from $f(P)$ to $0\in\mathbb R^d$.
\end{proof}

\begin{definition}
The \emph{Cartesian product of two polyhedra} $P$ and $Q$ with respective cellular decompositions $\mathcal C$ and $\mathcal D$ is their Cartesian product $P\times Q$ as a subset of the corresponding Euclidean space with its \emph{natural cellular decomposition} into all products $F\times G$,  $F\in\mathcal C$, $G\in\mathcal D$. 
\end{definition}

The natural cellular decomposition may be further subdivided into a triangulation. Minding that $P\subset\mathbb R^N$ and $Q\subset\mathbb R^M$ we may say that the natural projections $P\times Q\to P$ and $P\times Q\to Q$ are affine on the faces of any cellular decomposition of $P\times Q$ including the natural one.

For the case of a cylinder $P\times [a,b]$ or $P\times S^1$ and already triangulated $P$, we often use the \emph{product triangulation}, which is also sometimes called a lexicographic triangulation (it can be defined the same way for the product of any two triangulated polyhedra).

\begin{definition}
Suppose that a triangulation of $P$ is compatible with the action of a finite group $G$. Let $I$ denote the line segment $[a,b]$ or the circle $S^1$ and let $I$ be somehow triangulated that is decomposed into segments. A \emph{product triangulation} of $P\times I$ (with respect to the triangulations of $P$ and $I$) is defined as follows. Order the orbits of the vertices of $P$ in arbitrary way; since the action of $G$ is compatible with the triangulation, this induces a linear order on the vertices of every face. Order also the vertices of $I$, denote both orders by '$<$'. Every cylindrical cell of $P\times I$ then has a form $(v_0,\ldots, v_k)\times [c,d]$, where $v_0<\ldots<v_k$ and $c<d$. Split this cell into the simplices 
\[
(v_0\times c, \ldots, v_i\times c, v_i\times d,\ldots v_k\times d),
\]
where $i$ goes from $0$ to $k$.
\end{definition}

Note that any product triangulation has the following standard properties:

\begin{itemize}
\item
The product triangulation is indeed a triangulation and is compatible with the action of $G$ by $g(x,t)=(gx,t)$.
\item 
If $[c,d]$ is a $1$-simplex in the subdivision of $I$ then $P\times [c,d]$ is a subpolyhedron of $P\times I$, also triangulated as a product.
\item 
The sets $P\times \{a\}$ and $P\times \{b\}$ are subpolyhedra of $P\times [a,b]$ triangulated as in the original triangulation of $P$.
\item 
The projection $P\times [a,b]\to [a,b]\subset \mathbb R^1$ is PL. For an iterated product trinangulation $(P\times [a,b])\times [c,d]$ the projection $(P\times [a,b])\times [c,d]\to [a,b]\times [c,d]\subset \mathbb R^2$ is again a PL map.
\item 
If the starting triangulations of $P$ and $I$ are sufficiently fine then the obtained triangulation of $P\times I$ is also sufficiently fine and may be used, for example, in Lemma~\ref{lemma:equivariant-approximation}.
\item 
If $Q$ is a subpolyhedron of $P$ then the product triangulation of $Q\times I$ is a subpolyhedron of the product triangulation of $P\times I$.
\end{itemize}

Whenever a polyhedron $P$ is acted on by a group $G$ we assume that the action of $G$ on $P\times [a,b]$ is by $g(x,t)=(gx,t)$. The following lemma is an analogue of Thom's transversality theorem.

\begin{lemma}
\label{lemma:thom}
Let $P$ be a polyhedron and let a PL map $f : P\times [a,b]\to \mathbb R^d$ be transverse to zero with respect to some cellular decomposition of $P\times [a,b]$. 
Then for every $t_0\in [a,b]$ with finitely many exceptions the restriction of $f$ to $P\times \{t_0\}$ is transverse to zero, where the cellular decomposition of $P\times \{t_0\}$ is produced by intersecting the faces of $P\times [a,b]$ with $P\times \{t_0\}$.
\end{lemma}

\begin{proof}
The preimage $Z=f^{-1}(0)$ is a polyhedron that has a cellular decomposition into convex polytopal faces $Z\cap F$ over the faces $F$ of the cellular decomposition of $P\times [a,b]$. There may exist faces of this cellular decomposition of $Z$ that project to single point on the segment $[a,b]$, those are the finite set of exceptional values of $t$. 

Consider a non-exceptional value $t_0\in [a,b]$ and a point $(x,t_0)$ such that $f(x,t_0)=0$. Its minimal containing face $F$ of $P\times [a,b]$ has surjective derivative $f|_F$. Hence $Z\cap F$ is a convex polytope of codimension $d$ in $F$ and $(x,t_0)$ is in its relative interior. Since $t_0$ is non-exceptional, we see that $t$ is not constant on $Z\cap F$. 

Let us show that the restriction of $f$ to $P\times \{t_0\}$ is transverse to zero for the cell $F\cap P\times \{t_0\}$. Consider the tangent space $TF$ of $F$ and denote the derivative of $f|_F$ by $D : TF\to \mathbb R^d$. Since the tangent space of $Z\cap F$ is the kernel of $D$, the tangent hyperplane $H\subset TF$ defined by $dt = 0$ does not contain $\ker D$ (otherwise, $t$ would be constant on $Z\cap F$). Take a line $L\subseteq \ker D$ not contained in $H$ and observe that the value of $D$ on any tangent vector $V\in TF$ is also attained in the unique point of intersection $(V+L)\cap H$. Hence $D$ restricted to $H$ is also surjective.
\end{proof}

We will use the notion of a \emph{pseudomanifold}, which resembles a homological cycle modulo an integer $q$. The difference is that the faces of a homological cycle carry arbitrary coefficients along with orientations, while the faces of a pseudomanifold only carry orientations.

\begin{definition}
A polyhedron $P$ with a cellular decomposition is a \emph{pseudomanifold modulo $q$ of dimension $d$} if $\dim P = d$, its faces of dimension $d$ are oriented, and for any face $F$ of dimension $d-1$ the number of faces $F'\supset F$ counted with signs given by comparing the orientations of $F$ and $\partial F'$ is divisible by $q$.
\end{definition}

\begin{definition}
A polyhedron $P$ with a cellular decomposition is a \emph{pseudomanifold modulo $q$ of dimension $d$ relative to} a subpolyhedron $Q\subset P$ if $\dim P = d$, $\dim Q\le d-1$, the faces of $P$ of dimension $d$ are oriented, and for any face $F\subset P$ of dimension $d-1$ not contained in $Q$ the number of faces $F'\supset F$ counted with signs given by comparing the orientations of $F$ and $\partial F'$ is divisible by $q$.
\end{definition}

The following lemma shows that the property of being a pseudomanifold does not depend on choosing a particular cellular decomposition.

\begin{lemma}
\label{lemma:subdivision-pseudomanifold}
If $P$ is a pseudomanifold modulo $q$ of dimension $d$ relative to $Q\subset P$ with respect to some cellular decomposition of $P$ then it is a pseudomanifold modulo $q$ of dimension $d$ relative to $Q\subset P$ with respect to any other cellular decomposition for which $Q$ is a subpolyhedron with a suitable choice of orientation of $d$-faces.
\end{lemma}
\begin{proof}
Having two cellular decompositions $\mathcal C$ and $\mathcal C'$, we may consider also its common subdivision with faces $F\cap F'$, $F\in\mathcal C$, $F'\in\mathcal C'$. Hence it is sufficient to consider the case when one cellular decomposition is a subdivision of the other.

Let $\mathcal C$ be a cellular decomposition of $P$ and $\mathcal D$ be its subdivision. Denote the corresponding sets of $d$-dimensional faces by $\mathcal C_d$ and $\mathcal D_d$.

Any $F\in \mathcal C_d$ is split into faces $F=E_1\cup\ldots\cup E_{N(F)}$ in $\mathcal D_d$. A choice of an orientation of $F$ induces the choice of orientations of all the $E_i$. In the opposite direction, if the $E_i$ have orientations satisfying the definition of a $d$-dimensional pseudomanifold then on any $(d-1)$-dimensional face $W = E_i\cap E_j$ the orientations of $\partial E_i$ and $\partial E_j$ must be opposite. Since one may pass from any $E_i$ to any other $E_j$ within $F$ by a sequence of adjacent $E_i,E_{i_2},\ldots, E_{i_{k-1}}, E_j$ then the orientations of all the $E_i$ must correspond to a single orientation of $F$.

In the sum of boundaries $\partial E_1 + \dots + \partial E_{N(F)}$ the $(d-1)$-faces not contained in $\partial F$ are canceled, and those contained in $\partial F$ provide a subdivision of $\partial F$ with the orientations matching that of $\partial F$. The sum $\sum_{F\in\mathcal C_d} (\partial E_1 + \dots + \partial E_{N(F)})$ is the same as the sum $\sum_{E\in\mathcal D_d} \partial E$, and is a subdivision of the sum $\sum_{F\in\mathcal C_d} \partial F$. Hence $\sum_{E\in\mathcal D_d} \partial E$ is contained in $Q$ after cancellations modulo $q$ if and only if $\sum_{F\in\mathcal C_d} \partial F$ is contained in $Q$ after cancellations modulo $q$.
\end{proof}

If $P$ is a $d$-dimensional pseudomanifold modulo $q$ relative to $Q\subset P$ then the natural cellular decomposition and the product triangulation of the cylinder $P\times [a,b]$ also have a natural choice of orientations on their $(d+1)$-dimensional faces. In a standard way, we write $P\times [a,b]$ for convenience, but orient it as a product $[a,b]\times P$, when every cell $(d+1)$ $F\times I\subseteq P\times [a,b]$ is oriented as a Cartesian product $I\times F$. The equality for elementary chain boundaries $\partial (I\times F) = \partial I\times F - I\times \partial F$ then sum up to the equality
\[
\partial ([a,b]\times P) = \{b\}\times P - \{a\}\times P + [a,b]\times \partial P\mod q,
\]
that is, the cylinder $P\times [a,b]$ is a pseudomanifold modulo $q$ relative to $P\times\{a\}\cup P\times \{b\}\cup [a,b]\times Q$ ($Q\times [a,b]$ is a subpolyhedron in the natural cellular decomposition of $P\times [a,b]$). The same applies to the product triangulation by Lemma~\ref{lemma:subdivision-pseudomanifold}.

Note that in the natural cellular decomposition and in the product triangulation of the product, for any $d$-dimensional face $F\subset P\times\{a\}$ there is a unique $(d+1)$-dimensional face $E$ of $P\times [a,b]$ having $F$ in its boundary and the sign of $F$ in $\partial E$ is $-1$. For a $d$-dimensional face $F\subset P\times\{b\}$ there is a unique $(d+1)$-dimensional face $E$ of $P\times [a,b]$ having $F$ in its boundary and the sign of $F$ in $\partial E$ is $+1$. 

\begin{definition}
\label{definition:point-sign}
Let $P$ be a pseudomanifold modulo $q$ of dimension $d$ relative to $Q\subset P$. Assume that a PL map $f : P\to \mathbb R^d$ is transverse to zero. Then a point $x\in f^{-1}(0)$ has sign $+1$ if the restriction of $f$ to the $d$-face $F\ni x$ preserves the orientation and has sign $-1$ otherwise.
\end{definition}

The following lemma is a straightforward generalization of the argument in \cite[Section~2.2]{matousek2003using}.
It explains how the preimage of zero looks like for an equivariant and transverse to zero map from a $(d+1)$-dimensional cylinder pseudomanifold to $\mathbb R^d$. It turns out to be a $1$-dimensional pseudomanifold relative to the ends of the cylinder. Moreover, the directions of its edges incident to the ends of the cylinder coincide with the signs of the corresponding zeroes of the restriction of the map to the ends in the sense of Definition~\ref{definition:point-sign}. 

\begin{lemma}
\label{lemma:equivariant-homotopy}
For a polyhedron $P$ with a free action of a finite group $G$, let a cellular decomposition of $P\times [a,b]$ be compatible with the action of $G$. Let $P\times [a,b]$ be a pseudomanifold modulo $q$ of dimension $d+1$ relative to the union of its three subpolyhedra $P\times \{a\}$, $P\times \{b\}$, and some other $G$-invariant subpolyhedron $Q$. Let $G$ also act linearly on $\mathbb R^d$. Let the action of $G$ preserve the orientation of the faces of $P\times [a,b]$ and preserve the orientation of $\mathbb R^d$. 

Assume that a $G$-equivariant PL map $h : P\times [a,b]\to \mathbb R^d$ is transverse to zero and $h(Q)\not\ni 0$. Let $f(x) = h(x,0)$ and $g(x)=h(x,1)$. Then

i$)$ The set $h^{-1}(0)$ is a graph, that is, a $1$-dimensional polyhedron.

ii$)$ 
The graph $h^{-1}(0)$ can be oriented so that the homological boundary modulo $q$ of $h^{-1}(0)$ is the $0$-cycle modulo $q$ $g^{-1}(0)-f^{-1}(0)$. \footnote{More precisely, every vertex of $h^{-1}(0)$ not belonging to $P\times\{a,b\}$ has the difference between the number of incoming edges and the number of outgoing edges divisible by $q$. A vertex of $h^{-1}(0)$ in $P\times \{b\}$ is adjacent to one edge and its direction $(+1$ for incoming and $-1$ for outgoing$)$ equals the sign of this vertex in $g^{-1}(0)$ in the sense of Definition~\ref{definition:point-sign}. A vertex of $h^{-1}(0)$ in $P\times \{a\}$ is adjacent to one edge and its direction $(+1$ for incoming and $-1$ for outgoing$)$ is opposite to the sign of this vertex in $f^{-1}(0)$ in the sense of Definition~\ref{definition:point-sign}.}

iii$)$ The graph $h^{-1}(0)$ is $G$-invariant and $G$ preserves its orientation. The point sets $f^{-1}(0)$ and $g^{-1}(0)$ are $G$-invariant and the assignment of signs is also preserved by $G$.
\end{lemma}
\begin{proof}
Set $[a,b]=[0,1]$ after an affine transformation on the line.

(i) This follows from transversality, the edges of $h^{-1}(0)$ are its intersections with $(d+1)$-faces of $P\times [0,1]$ and the vertices of $h^{-1}(0)$ are its intersections with $d$-faces of $P\times [0,1]$.


(ii) Let us orient an edge $e\in h^{-1}(0)$ adjacent to a vertex $v\in h^{-1}(0)$. The edge $e$ is $h^{-1}(0)\cap E$ for a $(d+1)$-dimensional face $E$ of $P\times [0,1]$, and the vertex $v$ is $h^{-1}(0)\cap F$ for a $d$-dimensional face $F\subset E$ of $P\times [0,1]$. Choose affine coordinates $x_0,\ldots, x_d$ on $E$ so that $x_0\le 0$ on $E$ and $x_0=0$ precisely on $F$. Assume that the form $dx_0\wedge\cdots\wedge dx_d$ corresponds to the orientation of $E$, then the form $dx_1\wedge\cdots \wedge dx_d$ corresponds to the orientation of the face $F$ in the boundary $\partial E$.

The orientation of $e$ is determined so that its orientation together with the orientation of its space of normal vectors (in the tangent space of $E$) pulled back from $\mathbb R^d$ by $h$ (that is, the orientation by the coordinates $h_1,\ldots, h_d$ of the map $h$) together produce the orientation of $E$. Then 

\begin{itemize}
\item
$x_0$ increases along the direction of $e$ if and only if the orientations of its space of normal vectors by $x_1,\ldots, x_d$ and by $h_1,\ldots,h_d$ coincide. Then the sign of $v$ in $\partial e$ is $+1$. In this case the orientations of $F$ by $x_1,\ldots,x_d$ and by $h_1,\ldots,h_d$ coincide.

\item
$x_0$ decreases along the direction of $e$ if and only if the orientations of its space of normal vectors by $x_1,\ldots, x_d$ and by $h_1,\ldots,h_d$ differ. Then the sign of $v$ in $\partial e$ is $-1$. In this case the orientations of $F$ by $x_1,\ldots,x_d$ and by $h_1,\ldots,h_d$ differ.
\end{itemize}

It follows that the contribution of $e$ at $v$ (the sign ov $v$ in $\partial e$) equals the sign produced by comparing the orientation of $F\ni v$ by $h_1,\ldots,h_d$ from its orientation as a part of $\partial E$. The former sign only depends on $v$ and to every $E\supset F$ there corresponds an edge $e = h^{-1}(0)\cap E$ with endpoint $v$. It follows that when $v\not\in P\times \{0,1\}$ then the pseudomanifold modulo $q$ definition can be applied because $F$ and $E$ are not contained in $Q$. This implies that the sum of contributions of all edges of $h^{-1}(0)$ at $v$ is divisible by $q$.

The remaining vertices of $h^{-1}(0)$ lie in either $f^{-1}(0)\times\{0\}$ or $g^{-1}(0)\times\{1\}$. From the transversality assumption and the structure of the cylinder, for any such vertex $v$ and its containing $d$-face $F$, there is a unique $(d+1)$-face $E$ containing $F$ and the unique edge $e$ of $h^{-1}(0)$ adjacent to $v$. The sign of $v$ as a point in $f^{-1}(0)\times\{0\}$ or $g^{-1}(0)\times\{1\}$ by Definition~\ref{definition:point-sign} is produced by comparing the orientation pulled back from $\mathbb R^d$ by $f$ or $g$ (that is, by $h$) on the face $F$ and the orientation of $F$ as a face of $P\times [0,1]$ (by Definition~\ref{definition:point-sign}). 
\begin{itemize}
\item 
For $t=1$, one may take $x_0=t-1$ in the above argument and conclude that the orientation of $F$ as a part of $\partial E$ is the same as its original orientation in $P$.
\item
For $t=0$, one may take $x_0=-t$ in the above argument and conclude that the orientation of $F$ as a part of $\partial E$ is the opposite to its original orientation in $P$.
\end{itemize}
It follows that the sign contribution of this unique edge $e$ at its endpoint $v$ equals the sign of $v$ as a point in $g^{-1}(0)\times\{1\}$ or the opposite to the sign of $v$ as a point in $f^{-1}(0)\times\{0\}$.

(iii) The action of $G$ preserves the orientations of $P\times [0,1]$ and $\mathbb R^d$ by the assumption and therefore preserves the orientation of the graph $h^{-1}(0)$, and preserves the signs on $\partial h^{-1}(0) = g^{-1}(0)-f^{-1}(0)$.
\end{proof}

\begin{remark}
\label{remark:truncated-cylinder}
Suppose that $h:P\times [a,b]\to \mathbb R^d$ is transverse to zero with respect to a product triangulation and for some $[t_1, t_2]\subseteq [a,b]$ the restrictions $h|_{P\times\{t_1\}}$ and $h|_{P\times\{t_2\}}$ are also transverse to zero, for example, when $t_1$ and $t_2$ are given by Lemma~\ref{lemma:thom}. Then the restriction of $h$ to $P\times [t_1,t_2]$ is transverse to zero with respect to the cellular decomposition of $P\times [t_1,t_2]$ induced by the triangulation of $P\times [a,b]$ and we can apply the above Lemma~\ref{lemma:equivariant-homotopy} to it.
\end{remark}

Note that our definition of ``transverse to zero'' depends on the choice of a triangulation or a cellular decomposition and may be lost after passing to a subdivision. The following lemma allows us to restore the transversality and count preimages of zero with signs. 

\begin{lemma}
\label{lemma:subdivision-transverse}
Let $P$ be a pseudomanifold modulo $q$ of dimension $d$ relative to $Q\subset P$ with respect to a triangulation $\mathcal T$. Let $\mathcal T$ be compatible with the action of a finite group $G$, and let $Q$ be $G$-invariant, and let $G$ also act linearly on $\mathbb R^d$. Let $f:P\to \mathbb R^d$ be a $G$-equivariant transverse to zero with respect to $\mathcal T$ map such that $f(Q)\not\ni 0$. Let a triangulation $\mathcal S$ be a subdivision of $\mathcal T$.

If a transverse to zero $G$-equivariant $\widetilde f$ is PL with respect to $\mathcal S$ and is uniformly sufficiently close to $f$ then there is a sign and $G$-orbit preserving bijection between the finite sets $f^{-1}(0)$ and $\widetilde f^{-1}(0)$ $($where the signs of $f^{-1}(0)$ are defined with respect to $\mathcal T)$.
\end{lemma}

The proof of Lemma~\ref{lemma:subdivision-transverse} relies on the following more general lemma. Besides the proof of Lemma~\ref{lemma:subdivision-transverse}, the following lemma is also needed in the situation of Lemma~\ref{lemma:bu} below, where the map we have is not transverse to zero in our definition, but is transverse to zero in a certain ``topological sense'', and we need to make it transverse to zero in our definition.

\begin{lemma}
\label{lemma:local-degree}
Let $P$ be a pseudomanifold modulo $q$ of dimension $d$ relative to $Q\subset P$, let its triangulation be compatible with the action of a finite group $G$, and let $Q$ be $G$-invariant. Let $G$ also act linearly on $\mathbb R^d$. Let $f : P\to \mathbb R^d$ be a $G$-equivariant PL map such that $f(Q)\not\ni 0$. Assume that for every point $x$ such that $f(x)=0$ there is a subpolyhedron $U_x\ni x$ in the same triangulation such that $U_x$ contains $x$ in its interior, $f(U_x)$ is a PL ball containing $0$ in its interior, and $f|_{U_x}$ is a homeomorphism. Also assume that the interiors of $U_x$ and $U_y$ are disjoint for $x\neq y$.

Then for all sufficiently small transverse to zero $G$-equivariant perturbations $\widetilde f : P\to \mathbb R^d$ of $f$ PL with respect to the original triangulation the set $\widetilde f^{-1}(0)$ contains precisely one $x$ in each of $U_x$ and no points outside the $U_x$, and the sign of $x\in \widetilde f^{-1}(0)$ equals $1$ or $-1$ depending on whether $f|_{U_x}$ keeps or flips the orientation of $U_x$ compared to the orientation of $f(U_x)$.
\end{lemma}
\begin{proof}
We perturb $f$ as a PL map with respect to the fixed triangulation. For every face $F\subset U_x$ the property ``$f|_F$ is a homeomorphism onto $f(F)$'' is preserved under sufficiently small perturbation. Moreover, the sign of the determinant of derivative of $f|_F$ for $\dim F = d$ is also preserved.

For a pair of faces $F,F'\subset U_x$ the property $f(F)\cap f(F') = f(F\cap F')$ is also preserved under sufficiently small perturbation of $f$. Since there is a finite list of mentioned properties, under a sufficiently small perturbation all of them will be preserved. Then the images $f(F)$ of all faces $F\subseteq U_x$ produce a triangulation of $f(U_x)$ and therefore the property ``$f|_{U_x}$ is a homeomorphism onto $f(U_x)$ for all $x$'' is preserved under sufficiently small perturbations. In particular, after a sufficiently small perturbation $f^{-1}(0)\cap U_x$ is either a single point or an empty set. 

The property that $f(P\setminus \bigcup \inte U_x)\not\ni 0$ is preserved under sufficiently small perturbation, since the distance from the compact $f(P\setminus \bigcup \inte U_x)$ to zero remains positive. Hence $f^{-1}(0)\subseteq \bigcup \inte U_x$ is preserved under sufficiently small perturbations.

The property $\inte f(U_x)\ni 0$ is preserved less trivially. Since $f(U_x)$ is a PL ball of full dimension in $\mathbb R^d$ (before perturbations), its boundary is homeomorphic to a sphere with corresponding orientation. Hence $\partial U_x$ is also homeomorphic to a sphere. Then the degree of the map $s: \partial U_x\to S^{d-1}$ of topological spheres defined by $s(y)=\frac{f(y)}{|f(y)|}$ is originally $\pm 1$ and is preserved for sufficiently small perturbations of $f$, for example, by homotopy invariance of the mapping degree. Then $\inte f(U_x)\not \ni 0$ would imply $f(U_x)\not \ni 0$, which would imply that $s$ extends to $U_x$ continuously and has zero mapping degree, a contradiction. Hence $f(U_x)\ni 0$ and we have shown that for sufficiently small perturbations of $f$ the set $f^{-1}(0)\cap U_x$ is a single point. 

Generically a small $G$-equivariant perturbation $\widetilde f$ of $f$ preserving all the above mentioned properties will be transverse to zero by Lemma~\ref{lemma:equivariant-generic} and will satisfy the requirement in the statement of the lemma.
\end{proof}

\begin{proof}[Proof of Lemma~\ref{lemma:subdivision-transverse}]
For every $x\in f^{-1}(0)$ let $U_x$ be the face of $\mathcal T$ containing $x$. From transversality with respect to $\mathcal T$, $f|_{U_x}$ is a homeomorphism onto $f(U_x)$ and $f(U_x)$ is a simplex containing $0$ in its interior. Lemma~\ref{lemma:local-degree} then applies to perturbations affine on faces of $\mathcal S$.
\end{proof}

The following lemma is also a straightforward generalization of the argument in \cite[Section~2.2]{matousek2003using}. It says that if we count the difference between the number of orbits of positive and negative preimages of zeroes for two different transverse to zero $G$-equivariant maps we get the same number modulo $q$.

\begin{lemma}
\label{lemma:equivariant-euler}
Let $P$ be a triangulated pseudomanifold modulo $q$ of dimension $d$ relative to $Q\subset P$. Let the triangulation of $P$ be compatible with the action of a finite group $G$ and let $Q$ be $G$-invariant. Let $G$ also act linearly on $\mathbb R^d$. Let the action of $G$ preserve the orientation of the faces of $P$ and preserve the orientation of $\mathbb R^d$. Let $f,g : P\to \mathbb R^d$ be $G$-equivariant transverse to zero maps, possibly with respect to different triangulations of $P$.

Assume that the restrictions $f|_Q$ and $g|_Q$ are homotopic by a $G$-equivariant continuous homotopy $h$ such that $h(Q\times [0,1])\not\ni 0$. Then 

i$)$ To every $G$-orbit of points in $f^{-1}(0)$ and $g^{-1}(0)$ there corresponds a well-defined sign as in Definition~\ref{definition:point-sign};

ii$)$ The sum of signs over $G$-orbits in $f^{-1}(0)$ equals the sum of signs over $G$-orbits in $g^{-1}(0)$ modulo $q$.


\end{lemma}
\begin{proof}
If the triangulations with respect to which $f$ and $g$ are transverse to zero are different, or the triangulation with respect to which $Q$ is a subpolyhedron is different from either of those, we pass to a common cellular subdivision consisting of intersections of faces of different triangulations. And then take a barycentric subdivision to obtain a common triangulation. Perturbing the maps $f$ and $g$ in this common triangulation slightly we make them transverse to zero by Lemma~\ref{lemma:equivariant-generic}. By Lemma~\ref{lemma:subdivision-transverse}, this perturbation does not change the number and signs of elements in the sets $f^{-1}(0)$ and $g^{-1}(0)$. Under sufficiently small perturbations the assumptions $f(Q)\not\ni 0$ and $g(Q)\not\ni 0$ are preserved and the restrictions $f|_Q$ and $g|_Q$ remain homotopic by a $G$-equivariant continuous homotopy (denoted by the same letter $h$) such that $h(Q\times [0,1])\not\ni 0$.

Taking an iterated barycentric subdivision of $P$ and also subdividing $[0,1]$ we may get an arbitrary fine product triangulation of $P\times [0,1]$. By Lemma~\ref{lemma:equivariant-approximation} we may approximate $h : Q\times [0,1] \to \mathbb R^d$ arbitrarily well by a map PL with respect to this fine product triangulation of $P\times [0,1]$. This approximation is defined by its values on the vertices of a triangulation and then extended affinely on faces. Since the restriction of $h$ to the subpolyhedron $Q\times\{0,1\}$ was already PL and we may preserve the same values on the vertices in $Q\times \{0,1\}$ for the approximation, we may assume that the approximation coincides with $h$ on $Q\times\{0,1\}$ and therefore coincides with $f$ or $g$ there. By compactness of $Q\times [0,1]$, a sufficiently close approximation also avoids zero on $Q\times [0,1]$. Denoting the approximation by the same letter $h$, we may assume that $h$ is PL with respect to our fine product triangulation of $P\times [0,1]$.

We then $G$-equivariantly extend the map defined by $f,g,h$ on $P\times \{0\}\cup P\times\{1\}\cup Q\times [0,1]$ to a $G$-equivariant homotopy
\[
h : P\times [0,1]\to \mathbb R^d.
\]
This may be done by first defining $h$ on the vertices where it was not defined previously in an arbitrary $G$-equivariant way and then extending it to all remaining simplices affinely.

Note that after the subdivisions the maps $f(x)=h(x,0)$ and $g(x)=h(x,1)$ need not be transverse to zero, and $h$ need not be transverse to zero. By Lemma~\ref{lemma:equivariant-generic}, we may perturb $h$ very slightly to make $h$, $f(x)=h(x,0)$, and $g(x)=h(x,1)$ transverse to zero. By Lemma~\ref{lemma:subdivision-transverse}, this perturbation does not change the number and signs of elements in the sets $f^{-1}(0)$ and $g^{-1}(0)$.


%

We are now able to apply Lemma~\ref{lemma:equivariant-homotopy} to $h$. By Lemma~\ref{lemma:equivariant-homotopy}(i,ii), $h^{-1}(0)$ is an oriented graph, let $V$ be the set of its vertices and $E$ the set of its edges. For $v\in V$ let $\deg v$ be the difference between the number of incoming edges and outgoing edges in $v$. By Lemma~\ref{lemma:equivariant-homotopy}(iii) the orientation of the graph $h^{-1}(0)$ is preserved by the action of $G$ and therefore $\deg v$ is constant on orbits, that is a function of the quotient $V/G$. Consider 
\[
S = \sum_{v\in V/G} \deg v = \frac{1}{|G|} \sum_{v\in V} \deg v.
\]
Each edge $e\in E$ contributes $1$ to its endpoint and $-1$ to its startpoint and therefore $S=0$.

We know that
\[
S_I = \sum_{v\in (V\setminus P\times \{0,1\})/G} \deg v = 0\mod q
\]
from Lemma~\ref{lemma:equivariant-homotopy}(ii), since every such $\deg v$ is divisible by $q$. The remaining part
\[
S - S_I = S_O = \sum_{v\in (V\cap P\times \{0,1\})/G} \deg v = \sum_{v\in (f^{-1}(0)\times\{0\})/G\cup (g^{-1}(0)\times\{1\})/G} \deg v = 0\mod q
\]
as well. By Lemma~\ref{lemma:equivariant-homotopy}(ii) $S_O$ equals the sum of signs over the $G$-orbits in $g^{-1}(0)$ minus the sum of signs over the $G$-orbits in $f^{-1}(0)$, where Lemma~\ref{lemma:equivariant-homotopy}(iii) ensures that the same sign is assigned to the whole orbit of vertices, proving (i). Hence the sum of signs over the $G$-orbits in $g^{-1}(0)$ equals the sum of signs over the $G$-orbits in $f^{-1}(0)$ modulo $q$, proving (ii).
\end{proof}

\begin{remark}
The proofs of Lemmas~\ref{lemma:equivariant-homotopy} and \ref{lemma:equivariant-euler} also work in the case when the action of $G$ changes the orientations of $P$ and $\mathbb R^d$, but only changes both orientations simultaneously. We only use the given version but this slightly more general version may be useful elsewhere. If $G$ does not change the orientations of $P$ and $\mathbb R^d$ then $P/G$ is a pseudomanifold modulo $q$ itself and one may work in $P/G\times [0,1]$ in a simpler fashion counting points, not orbits of points.
\end{remark}

Parts of our main argument deal with curves or homological cycles on two-dimensional surfaces. More precisely, we will use the intersection number $\xi\cdot \eta$ of two $1$-dimensional cycles modulo $q$ on an oriented surface. The argument below is fairly standard, but we include it for completeness. In the proofs in this paper we only need $1$-dimensional PL cycles modulo $q$, that is collections of oriented segments with coefficients, whose homological boundary (the algebraic sum of their endpoints, a $0$-cycle) is divisible by $q$. We call them \emph{$1$-cycles modulo $q$}.

\begin{definition}
Let $S$ be a two-dimensional oriented compact PL surface with boundary, so that its boundary $\partial S$ contains two closed PL subsets $A$ and $B$. Let $\xi$ be a $1$-cycle modulo $q$ in $S\setminus B$ relative to $A$ ($\partial\xi\subseteq A$), $\eta$ be a $1$-cycle modulo $q$ in $S\setminus A$ relative to $B$ ($\partial\eta\subseteq B$). For $\xi$ and $\eta$ transverse to each other, the \emph{intersection number} $\xi\cdot\eta$ is defined by counting modulo $q$ their transversal intersection points with coefficients and signs coming from comparing the orientation of the surface and the orientation corresponding to $v\wedge w$, where $v$ and $w$ are the intersecting oriented edges of $\xi$ and $\eta$ respectively. For arbitrary $\xi$ and $\eta$, it is defined by perturbing them to the PL transverse situation and checking that the thus obtained number does not depend on the perturbation.
\end{definition}

\begin{lemma}
\label{lemma:intersection-mod-p}
Let $S$ be a two-dimensional oriented compact PL surface with boundary, so that its boundary $\partial S$ contains two closed PL subsets $A$ and $B$. Let $\xi$ be a $1$-cycle modulo $q$ in $S\setminus B$ relative to $A$, $\eta$ be a $1$-cycle modulo $q$ in $S\setminus A$ relative to $B$. Then 

i$)$ The intersection number modulo $q$, $\xi\cdot \eta$ is well-defined. 

ii$)$ If $\xi'$ is a $1$-cycle modulo $q$ in $S\setminus B$ relative to $A$ homologous to $\xi$ then $\xi'\cdot \eta = \xi\cdot \eta$ as residues modulo $q$.

iii$)$ If the intersection number $\xi\cdot \eta$ is not divisible by $q$ then $\xi$ and $\eta$ intersect.
\end{lemma}

When reading the proof below it may be helpful to know that in our applications of this essentially known lemma we only use its two cases: a) $S=\mathbb S^1\times [-\infty,\infty]$ (the infinite segment assumed homeomorphic to a finite one), $A = \mathbb S^1\times \{-\infty,\infty\}$, and $B=\emptyset$; b) $S=[-1,1]\times [a,b]$, $A=\{-1,1\}\times [a,b]$, and $B=[-1,1]\times\{a,b\}$. 

\begin{proof}
We start with (i). Assume we have two perturbations of $\xi$: $\xi'$ and $\xi''$; and two perturbations of $\eta$: $\eta'$ and $\eta''$, so that $\xi'$ is transverse to $\eta'$, $\xi''$ is transverse to $\eta''$. Consider yet another pair of perturbations $\xi'''$ and $\eta'''$ so that $\xi'''$ is transverse to $\eta'$,$\eta''$, $\eta'''$; $\eta'''$ is transverse to $\xi'$, $\xi'', \xi'''$, this is possible for generic perturbations.

Since all perturbations of $\xi$ are homologous to it, and all perturbations of $\eta$ are homologous to it, (i) follows by applying (ii) several times:
\[
\xi'\cdot\eta' = \xi'''\cdot\eta' = \xi'''\cdot \eta''' = \xi''\cdot\eta''' = \xi''\cdot\eta''.
\]

For part (ii), it is sufficient to consider an image of a triangle $T$ in $S\setminus B$ such that the algebraic difference between $\xi$ and $\xi'$ is the image of $\partial T$ modulo something belonging to $A$. Since $\eta$ does not touch $A$, it is sufficient to prove that the number of intersections of the image of $\partial T$ and $\eta$ counted with signs is divisible by $q$. After a perturbation, we may assume the map $T\to S$ piece-wise linear and generic keeping its boundary the same and transverse to $\eta$. After that we may further split $T$ into triangles that are mapped to $S$ by an embedding. 

After these reductions we consider an embedded triangle $T\subseteq S$ and the transverse intersection of the cycle $\eta$ and $\partial T$, counting its points modulo $q$ with signs. Consider the intersection $\eta\cap T$, this is a $1$-cycle modulo $q$ in $T$ relative to $\partial T$, since $\eta$ is a $1$-cycle modulo $q$ relative to $B$ and $T$ is disjoint from $B$.

Consider the contribution of an edge of $\eta$ to $\eta\cdot\partial T$. It is $0$ if both ends of the edge are inside $T$ or both outside $T$; otherwise, it is $+1$ if the endpoint is inside $T$ (and the starting point is outside) and $-1$ if the endpoint is outside $T$ (and the starting point is inside). In other words, each edge contributes the sum of two numbers: the first is $+1$ if the endpoint is inside $T$ and $0$ otherwise, the second is $-1$ if the starting point is inside $T$ and $0$ otherwise. Summing this up over all the edges we see that $\eta\cdot\partial T$ is the sum over the vertices of $\eta$ inside $T$ of the incoming edges minus the sum of the outgoing edges. For each vertex of $\eta$ inside $T$ this sum is $0$ modulo $q$ because $\eta$ is a cycle relative to $B$ and $T$ is disjoint with $B$.


In order to establish (iii) assume that $\xi$ and $\eta$ do not intersect. Then they may be perturbed to PL cycles $\xi'$ and $\eta'$ so that $\xi'$ and $\eta'$ do not intersect. Since in this case $\xi'$ and $\eta'$ are transverse to each other, their intersection number must be divisible by $q$, a contradiction.
\end{proof}

\subsection{The polyhedron in the configuration space}
\label{subsection:pk}

In \cite{bz2014} it was shown that there exists a polyhedron $P_m\subset F_m(\mathbb R^2)$ of dimension $m-1$, which is $\mathfrak S_m$-equivariantly homotopy equivalent to the whole $F_m(\mathbb R^2)$. We briefly recall its properties:

\begin{itemize}
\item
$P_m$ has dimension $m-1$, it is invariant with respect to the action of the permutation group $\mathfrak S_m$ on $F_m(\mathbb R^2)$ that affinely maps its faces to faces.
\item
There is a cellular structure on $P_m$ (not a triangulation, cells are not simplices but convex polytopes) with a single orbit of the top-dimensional cells of $P_m$ under the action of $\mathfrak S_m$. The top-dimensional cells of $P_m$ may be oriented so that $\mathfrak S_m$ acts on these orientations by the sign of the permutation \cite[Lemma~4.1]{bz2014}. 
\item
For $m=p^k$, $P_m$ becomes a pseudomanifold modulo $p$ (essentially \cite[Lemma~4.2]{bz2014}, see also the explanations in Section~\ref{section:bz-explanations}) with given orientations of the cells, that is the homological boundary of the $(m-1)$-chain of its top-dimensional cells with given orientation is divisible by $p$. 
\item
There exists an $\mathfrak S_m$-equivariant PL map $f : P_m\to W_m$ (of~\cite[Lemma~4.1]{bz2014}) such that $f^{-1}(0)$ consists of precisely one point in the interior of each top-dimensional cell of $P_m$. At a neighborhood of every point of $f^{-1}(0)$ the sign of $f$ in the sense of Lemma~\ref{lemma:local-degree} is $+1$ relative to the given orientation of $P_m$ and a fixed orientation on $W_m$.
\end{itemize}

\emph{We only use the stated properties of $P_m$ in our proof.} For further details of this construction we refer to the original paper \cite{bz2014}. Also note that our main proof in Sections~\ref{section:arbitrary} and \ref{section:lemma-proof} only uses $P_m$ for $m$ equals a prime $p$. In view of this, in Section~\ref{section:alt-explanations} we outline an alternative implicit construction of the polyhedra $Q_p\subset F_p(\mathbb R^2)$ that can be used instead of $P_p$ in our main proof. 

The following lemma is central in the proof of the equipartition result for $m=p^k$, it provides the facts about the restriction of $\tau : F_m(\mathbb R^2)\to W_m$ from Section~\ref{subsection:cstm} to $P_m$ that we are going to use in our main argument.

\begin{lemma}
\label{lemma:bu}
Let $m$ be a power of a prime $p$, $G_m=\mathfrak S_m$ for $p=2$ or the even permutation subgroup of $\mathfrak S_m$ for odd prime $p$. Then

i$)$ There exists an $\mathfrak S_m$-equivariant PL map $\tau_0 : P_m\to W_m$ that is transverse to zero and has the number of $G_m$-orbits of points in $\tau_0^{-1}(0)$ counted with signs not divisible by $p$. 

ii$)$ Any transverse to zero $G_m$-equivariant PL map $\tau : P_m\to W_m$ has the number of $G_m$-orbits of points in $\tau^{-1}(0)$ counted with signs not divisible by $p$;

iii$)$ Any continuous $G_m$-equivariant map $\tau : P_m\to W_m$ contains $0$ in its image.
\end{lemma}
\begin{proof}
Lemma~\ref{lemma:equivariant-compatible} produces a triangulation of $P_m$ that is compatible with the action of $\mathfrak S_m$. In fact, the construction of $P_m$ outlined in Section~\ref{section:bz-explanations} is a variant of the barycentric subdivision that is compatible with the action of $\mathfrak S_m$.

From~\cite[Lemma~4.1]{bz2014} (see details in Appendix~\ref{section:bz-explanations}) we obtain a $\mathfrak S_m$-equivariant map $f : P_m\to W_m$. This $f$ is not transverse to zero in our definition, but it is a local PL homeomorphism near every point of $f^{-1}(0)$ and has a unique $\mathfrak S_m$-orbit of points in $f^{-1}(0)$, all having sign $+1$ since $\mathfrak S_m$ acts on the orientations of both $P_m$ and $W_m$ by the permutation sign. A transverse to zero $\tau_0 : P_m\to W_m$ is then obtained by a sufficiently small $\mathfrak S_m$-equivariant perturbation of $f$ using Lemma~\ref{lemma:local-degree}.

From the above listed properties, $P_m$ is a pseudomanifold with a free action of $\mathfrak S_m$ and $G_m\subseteq \mathfrak S_m$ preserves its orientation. The point set $\tau_0^{-1}(0)$ (in the description of $P_m$) consists of a single $\mathfrak S_m$-orbit, that corresponds to a single $G_m$-orbit for $m=2$, or a pair of $G_m$-orbits with equal signs (because the odd permutations change the orientations of both $P_m$ and $W_m$ in this case). This finishes the proof of (i).

By Lemma~\ref{lemma:equivariant-euler}, for any other $G_m$-equivariant transverse to zero map $\tau: P_m \to W_m$ the number of $G_m$-orbits in $\tau^{-1}(0)$ counted with the orientation signs is not divisible by $p$. This proves (ii).

For a continuous $G_m$-equivariant map $\tau : P_m\to W_m$ and the proof of (iii) we apply Lemma~\ref{lemma:equivariant-approximation} and find a contradiction if $\tau(P_m)\not\ni 0$.
\end{proof}

\section{Proof for $m=2p^k$}
\label{section:2pk}

Let us start by considering the simplest particular case of the new result. We consider an odd prime $p$ and $m=2p^k$. The proof for arbitrary $m$ (also proving this particular case) given in Sections~\ref{section:arbitrary} and \ref{section:lemma-proof} is more technical. 

Take a parameter $t\in S^1$ in the unit circle which we interpret as an angle, so that $S^1=\{(\cos t, \sin t)\}$. Cut $K$ by a straight oriented line directed along $(\cos t, \sin t)$ into equal area halves, it is uniquely done given the direction $t$. Denote the half of $K$ to the left of the line by $L_t$.

Before going into details, let us explain the rest of the proof in a few sentences. As we know from Section~\ref{subsection:cstm}, for each $t$ the body $L_t$ can be partitioned into $p^k$ convex parts of equal area and equal perimeter, the partition need not be unique. We plot all the common perimeters of the parts that we can achieve in this way as a graph of a (multivalued) function $S^1\to \mathbb R$. This graph is a (closed) set in $S^1\times \mathbb R$ and is denoted $\zeta$ below. The key idea, is that 1) generically  $\zeta$ is a modulo $p$ $1$-cycle and 2) the number of solutions to the problem of partitioning $L_t$ is not divisible by $p$ and so the number of intersections of a generic vertical lines $\{t_0\}\times \mathbb R$ and  $\zeta$ is not divisible by $p$. From that we conclude that $\zeta$ (and even a connected component of $\zeta$) separates the top of the cylinder $S^1\times \mathbb R$ from its bottom, Claim~\ref{claim:separates}, and so $\zeta$ intersects itself after the half rotation by $\pi$, Claim~\ref{claim:cylinder-twodim}, see Figure~\ref{figure:cylender}. The latter means that for some $t$ we can achieve the same value of the perimeter for both $L_t$ (the left half) and $L_{t+\pi}$ (the right half), which solves the problem of partitioning $K$. 
Let us now go into details.

\begin{figure}[ht]
	\label{fig:cylinder}
	\parbox[b]{0.4\textwidth}{
		\begin{center}
			\includegraphics{fig-nandakumar-1}\\
		\end{center}
	}
	\hskip 0.6cm
	\parbox[b]{0.5\textwidth}{
		\begin{center}
			\includegraphics{fig-nandakumar-2}\\
		\end{center}
	}
	\caption{Partitioning into $2p^k$ parts starts with a halving cut shown on the left. On the right we show $S^1\times \mathbb R$ and the common values of the perimeter with $1$-cycles $\zeta$ (continuous line) and $R(\zeta)$ (dashed line).}
\end{figure}

Consider the problem of partitioning $L_t$ into $p^k$ convex parts of equal area and equal perimeter. Apply the scheme of Section~\ref{subsection:cstm} with the configuration space $F_{p^k}(\mathbb R^2)$ replaced by the polyhedron $P_{p^k}\subset F_{p^k}(\mathbb R^2)$ from Section~\ref{subsection:pk}. There appears a $G_{p^k}$-equivariant map 
\[
\tau : P_{p^k}\times S^1\to W_{p^k}.
\]
Let us recall how this map works. A point $(x,t)\in P_{p^k}\times S^1$ defines the partition of $L_t$ into $p^k$ convex parts of equal area that corresponds to $x$. The map $\tau$ first sends $(x,t)$ to the perimeters of these parts, a point in $\mathbb R^{p^k}$, and then projects this point to the orthogonal complement to the diagonal, $W_{p^k}$. The solution set $Z = \tau^{-1}(0)$ then corresponds to partitions of $L_t$ into parts of equal area and equal perimeter.

Perturb $\tau$ slightly using Lemma~\ref{lemma:equivariant-approximation} to make it transverse to zero and PL with respect to a sufficiently fine product triangulation of $P_{p^k}\times S^1$. After the perturbation $Z$ (now denoting the preimage of zero under the perturbed $\tau$) is a one-dimensional pseudomanifold modulo $p$ with a free action of $G_{p^k}$, this is justified by Lemma~\ref{lemma:equivariant-homotopy}(i,ii,iii). The lemma applies to $P_{p^k}\times S^1$ in the same way as to $P_{p^k}\times [0,2\pi]$, while the equality $\tau(x,0)=\tau(x,2\pi)$ allows to conclude that the quotient $Z/G_{p^k}$ is a one-dimensional pseudomanifold modulo $p$, including its vertices corresponding to $t=0, 2\pi$.

On $Z$ we have a function $f:Z\to\mathbb R$ which sends $(z,t)\in Z$ to the average perimeter of the $p^k$ parts of the partition of $L_t$ corresponding to $z$ (since we perturbed $\tau$ slightly, these parts might have slightly different perimeters, all arbitrary close to $f(z,t)$). By definition, $f$ is constant on the orbits of $G_{p^k}$ and so its quotient $\widetilde{f}:Z/G_{p^k}\to \mathbb R$ is well defined. Denote by $\zeta$ the image of $Z/G_{p^k}$ under the map $F(z,t) = (t, \widetilde f(z))$. Although $\zeta$ is a $1$-cycle modulo $p$ in the cylinder $S^1\times \mathbb R$, we denote its support by the same letter.

\begin{claim}
\label{claim:cylinder-twodim}
Let $R$ be the half-rotation $($that is, the rotation by $\pi)$ of the cylinder $S^1\times\mathbb R$.
Then $\zeta\cap R(\zeta)$ is non-empty.
\end{claim}

\begin{proof}[Deduction of the case $m=2p^k$ of the main theorem from Claim~\ref{claim:cylinder-twodim}]
The theorem follows from this claim since a common point $(t,x)$ of $\zeta$ and $R(\zeta)$ corresponds to a pair of partitions of $L_t$ and $L_{t+\pi}$ into $p^k$ parts each such that all the areas in both partitions are equal to $\frac{\area K}{m}$, and all the perimeters in both partitions equal $x$ with arbitrarily fine precision. The standard compactness (of the space of all partitions of $K$ into $m$ parts of equal area by the Blaschke selection theorem~\cite[Section~6.1]{gruber2007}) and going to the limit argument then provides a partition with precisely the same perimeters of the parts.
\end{proof}

To prove Claim~\ref{claim:cylinder-twodim} we first need the following:

\begin{claim}
\label{claim:separates}
A connected component of $\zeta$ separates the bottom $S^1\times \{-\infty\}$ from the top $S^1\times \{+\infty\}$ of the cylinder $S^1\times \mathbb R$.
\end{claim}

\begin{proof}[Proof of Claim~\ref{claim:separates}]
The plan of the proof is as follows:
\begin{itemize}
\item [A)] We choose $t_0\in S^1$ so that in a neighborhood of $P_{p^k}\times\{t_0\}$ the intersection $(Z/G_{p^k})\cap (P_{p^k}/G_{p^k})\times\{t_0\}$ is easy to analyze meaning that it is ``transverse'' in a certain sense. By Lemma~\ref{lemma:bu} we conclude that the algebraic sum of the points in this intersection is not divisible by $p$.
\item [B)] Considering the $F$-image of this intersection $F(Z/G_{p^k})\cap F((P_{p^k}/G_{p^k})\times\{t_0\})=\zeta\cap \{t=t_0\}$ we conclude that the intersection number $\zeta\cdot \{t=t_0\}$ is not divisible by $p$.
\item [C)] From $\zeta\cdot \{t=t_0\}\neq 0$ we deduce the statement of the claim by choosing a suitable connected component of $\zeta$.
\end{itemize}


(A).
Let $[a,b]\subset S^1$ be a $1$-simplex in the subdivision of $S^1$ in the product triangulation $P_{p^k}\times S^1$.
By the construction of the product triangulation, $P_{p^k}\times [a,b]$ is a subpolyhedron of $P_{p^k}\times S^1$, also with the product triangulation. 

Note that we are in the situation when $\tau$ is PL and transverse to zero. By Lemma~\ref{lemma:thom}, for all but a finite number of values $t\in[a,b]$ the restriction of $\tau$ to $P_{p^k}\times \{t\}$, denoted by $\tau_{t}$ for brevity, is transverse to zero. The pseudomanifold $Z$ has a finite number of vertices. So, there are $t_0\in(a,b)$ and a small $\delta > 0$ such that for all values $t\in [t_0-\delta,t_0+\delta]$ the map $\tau_{t}$ is transverse to zero, $(P_{p^k}\times [t_0-\delta, t_0+\delta])\cap Z$ contains no vertex of $Z$ and so consists of several oriented (by Lemma~\ref{lemma:equivariant-homotopy}(ii)) line segments connecting $P_{p^k}\times \{t_0-\delta\}$ to $P_{p^k}\times \{t_0+\delta\}$.

Then every point $(z,t_0)\in \tau_{t_0}^{-1}(0)$ is the intersection of such a line segment with $P_{p^k}\times \{t_0\}$. The sign of the point $(z, t_0)$ in the preimage of zero $\tau_{t_0}^{-1}(0)$ is $+1$ if the coordinate $t$ increases along the line segments and $-1$ otherwise, by Lemma~\ref{lemma:equivariant-homotopy}(ii) applied to the restriction of $\tau$ to $P_{p^k}\times [t_0-\delta, t_0]$ (see Remark~\ref{remark:truncated-cylinder} for the explanation why the lemma applies).

Passing to the quotient by the action of $G_{p^k}$, we see that $(P_{p^k}/G_{p^k}\times [t_0-\delta, t_0+\delta])\cap Z/G_{p^k}$ consists of several oriented line segments $s_i$ connecting $P_{p^k}/G_{p^k}\times \{t_0-\delta\}$ to $P_{p^k}/G_{p^k}\times \{t_0+\delta\}$. 

Since $\tau_{t_0}$ is transverse to zero, by Lemma~\ref{lemma:bu} we know that the sum of signs of $G_{p^k}$-orbits of points in $\tau_{t_0}^{-1}(0)$ is not divisible by $p$. Which means that the number of the segments $s_i$ along which $t$ increases minus the number of the segments $s_i$ along which $t$ decreases is not divisible by $p$.

(B).
Since $F$ preserves the $t$ coordinate, in $1$-cycle $\zeta= F(Z/G_{p^k})$ only the images of the segments $s_i$ intersect the vertical line $\{t=t_0\}$. Moreover, each curve $\sigma_i = F(s_i)$ starts on one side of the vertical line and ends on the other side.

To compute $\zeta\cdot\{t=t_0\}$ we need to consider a PL approximation of $\zeta$ transverse to $\{t=t_0\}$. Such an approximation can be chosen fixed on the endpoints of all $\sigma_i = F(s_i)$. Any PL approximation $\sigma_i'$ of $\sigma_i$ keeping the ends fixed has intersection number $\sigma_i'\cdot \{t=t_0\}$ equal to $-1$ or $+1$ depending on whether the start of $\sigma_i$ is on the right of the vertical line or on the left. The remaining part of $\zeta$, $F(Z/G_{p^k}\setminus\bigcup_i s_i)$, is at the distance at least $\delta$ from the vertical line $\{t=t_0\}$ and does not contribute to $\zeta\cdot\{t=t_0\}$ even after the approximation.
In view of Lemma~\ref{lemma:intersection-mod-p}(ii) we conclude that the intersection number $\zeta\cdot\{t=t_0\}=\sum_i \sigma_i\cdot\{t=t_0\}$ is not divisible by $p$, this is exactly the number of the segments $s_i$ along which $t$ increases minus the number of the $s_i$ along which $t$ decreases.

(C).
Now we explain choosing a connected component of $\zeta$. First, split the graph $Z/G_{p^k}$ into connected components $Y_1\sqcup \dots\sqcup Y_n$. Each connected component is a $1$-cycle modulo $p$. Since $\zeta\cdot\{t=t_0\} = \sum_i \zeta_i\cdot \{t=t_0\}$ is not divisible by $p$, some $\zeta_i\cdot\{t=t_0\}$ is not divisible by $p$.

Now consider the $1$-cycle $\zeta_i=F(Y_i)$ and assume that a curve $\gamma$ passing from the bottom $S^1\times \{-\infty\}$ to the top $S^1\times \{+\infty\}$ of the cylinder $S^1\times\mathbb R$ does not intersect $\zeta_i$. The curve $\gamma$ is homologous to a vertical line relative to the top and the bottom, if we consider the cylinder compactified to $S^1\times [-\infty,+\infty]$. Lemma~\ref{lemma:intersection-mod-p}(ii,iii) then applies and shows that the curve $\gamma$ must also intersect the $1$-cycle $\zeta_i$ since they have intersection number not divisible by $p$. 

The $1$-cycle $\zeta_i$ is connected and its support is contained in a connected component of the support of the $1$-cycle $\zeta$ that we choose as the output of the claim.
\end{proof}

\begin{proof}[Deduction of Claim~\ref{claim:cylinder-twodim} from Claim~\ref{claim:separates}]
From Claim~\ref{claim:separates} we know that a connected component $\zeta_i$ of $\zeta$ splits the cylinder $S^1\times \mathbb R$ into connected parts and separates the top from the bottom. Hence there is unique part $A$ that is infinite at the top and bounded at the bottom. The half-rotated $R(\zeta)$ has the corresponding component of the complement $R(A)$ and $A\cap R(A)\neq\emptyset$. 

Assume that $\zeta_i$ does not intersect $R(\zeta_i)$, in particular $\zeta_i$ does not intersect $\partial R(A)$. Since $\zeta_i$ is connected itself, then either $\zeta_i \cap R(A)=\emptyset$ or $\zeta_i\subset R(A)$. In the first case $A\supset R(A)$, in the second case $A\subset R(A)$, with strict inclusion since their boundaries have no intersection. Those strict inclusions are impossible since by $\pi$-rotating once more they would imply $A\subset R(A)\subset R(R(A))=A$ or $A\supset R(A)\supset R(R(A))=A$ respectively. This is a contradiction proving $\zeta_i\cap R(\zeta_i)\neq\emptyset$.
\end{proof}

\section{Proof for arbitrary $m$}
\label{section:arbitrary}

In our proof of the general case of Theorem \ref{theorem:main}, we are going to use induction, which also resembles the proof of a particular case of the Knaster problem by induction in \cite{yamabe1950}.

It turns out helpful to use the language of multivalued functions on the space of convex bodies $\mathcal K$. In fact, in all our arguments we are going to use the convex bodies contained in the original $K$ with area bounded from below by a sufficiently small positive number $A$. By the Blaschke selection theorem this space $\mathcal K_A$ is a Haudorff compact space in the Hausdorff metric for any $A>0$.

\begin{definition}
A \emph{nice} multivalued function $\mathcal K_A\to (-1,1)$ is determined by its closed graph in $\mathcal K_A\times [-1,1]$, that is given by the equation
\[
\phi(C, y) = 0,
\]
where $\phi: \mathcal K_A\times [-1,1]\to\mathbb R$ is a continuous single-valued function satisfying 
\[
\phi(C, -1) < 0,\quad \phi(C, 1) > 0
\]
for all $C\in \mathcal K_A$.
\end{definition}

By the intermediate value theorem, a nice multivalued function attains at least one value on every $C\in\mathcal K_A$, that is for every $C\in\mathcal K_A$ there exists $y$ such that the pair $(C,y)$ is on the graph of the multivalued function.

Here we restrict the values of a multivalued function to $(-1,1)$, which in practice may be assumed after an appropriate scaling of its values, if the values were in a larger interval $(-L, L)$. Any continuous single-valued function $f : \mathcal K_A\to (-1,1)$ may be considered as a nice multivalued function by putting
\[
\phi(C,y) = f(C) - y.
\] 

We will identify a nice multivalued function on $\mathcal K_A$ with the equation of its graph $\phi(C,y)$, when we need to refer to this function by a name. Theorem \ref{theorem:main} will follow from iterations of the following claim.

\begin{lemma}
\label{lemma:function-to-function}
Assume $\phi$ is a nice multivalued function on $\mathcal K_{A/p}$ and $p$ is a prime. Then there exists another nice multivalued function $\psi$ on $\mathcal K_A$ such that whenever $C\in\mathcal K_A$ satisfies 
\[
\psi(C, y) = 0
\]
then there exists a partition $C = C_1\cup \dots \cup C_p$ into convex bodies of equal area, such that
\begin{equation}
\label{equation:equalized-p}
\phi(C_1, y) = \dots = \phi(C_p, y) = 0.
\end{equation}
\end{lemma}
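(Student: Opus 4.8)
The plan is to mimic the $m=p^k$ construction from Section~\ref{section:bz}, but now with the ``perimeter'' function $f$ replaced by the given nice multivalued function $\phi(\cdot, y)$, treating $y\in[-1,1]$ as an additional parameter of the problem. Fix $C\in\mathcal K$ (or rather work over all of $\mathcal K$ simultaneously, since $\mathcal K$ is a polyhedron). For each point of $P_p$ we have a canonical partition $C = C_1\cup\dots\cup C_p$ into $p$ convex pieces of equal area (this is exactly the property of $P_p$ recalled at the start of Section~4). Define the $\mathfrak S_p$-equivariant map
\[
\sigma : P_p \times [-1,1] \to \mathbb R^p,\qquad \sigma(x, y) = \bigl(\phi(C_1, y),\dots,\phi(C_p, y)\bigr),
\]
and compose with the quotient $\mathbb R^p \to \mathbb R^p/\Delta = W_p$ to get an equivariant $\tau : P_p\times[-1,1]\to W_p$. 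A point $(x,y)$ with $\tau(x,y)=0$ is exactly a partition of $C$ into $p$ equal-area pieces with $\phi(C_1,y)=\dots=\phi(C_p,y)$; but this common value need not be zero, so we cannot yet read off \eqref{equation:equalized-p}. The point is that the common value is itself a continuous function of $(x,y)$, and the graph we will cut out lives naturally in $\mathcal K\times[-1,1]$.

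Next I would invoke the homological machinery from Section~\ref{section:bz}. Perturb $\tau$ equivariantly to be transverse to zero; then $\tau^{-1}(0)/G_p$ is a $1$-dimensional modulo $p$ cycle $Z$ in $(P_p/G_p)\times[-1,1]$ (here I am using the observation, boxed in Section~\ref{section:bz}, that with one extra parameter the solution set is a $1$-dimensional pseudomanifold modulo $p$ whose intersection with a generic slice $(P_p/G_p)\times\{y\}$ is the nontrivial $0$-cycle given by the prime-power theorem). Crucially, at $y=-1$ we have $\phi(C_i,-1)<0$ for every piece, so $\sigma(x,-1)$ has all coordinates negative, forcing the common value — and in particular the slice $Z\cap\{y=-1\}$ — to sit where $\tau\ne 0$ unless all coordinates agree; more carefully, the boundary behavior of $\phi$ guarantees that over $y=-1$ the map $\sigma$ points strictly into the ``all negative'' orthant and over $y=+1$ into the ``all positive'' orthant, so $Z$ has no boundary on either end and is genuinely an absolute modulo $p$ cycle in $(P_p/G_p)\times[-1,1]$ — actually a relative one, but one whose boundary contributes nothing. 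Now push $Z$ forward by the map sending $(x,y)\mapsto(C,y)\in\mathcal K\times[-1,1]$, and then take the closure of this image together with the data of the common value; intersect appropriately to define $\psi$.

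To actually produce $\psi$: for $(C,y)\in\mathcal K\times[-1,1]$ declare $\psi(C,y)=0$ iff $(C,y)$ lies in the image of $Z$, i.e. iff there is a point $x\in P_p$ over $C$ with $\tau(x,y)=0$ and the common value $\phi(C_i,y)$ equal to $0$. Wait — that is too restrictive; instead the right move is to let the common value be the output coordinate. Concretely, enlarge: consider the map $\Phi : P_p\times[-1,1]\to W_p\times\mathbb R$ whose $W_p$-part is $\tau$ and whose $\mathbb R$-part records $\phi(C_1,y)$ on the fiber where $\tau=0$; then $\{\phi(C_i,y)=0 \text{ for all }i\}$ is a codimension-$p$ condition, and over each $C$ the set of $y$ for which it is solvable is what we want $\psi$ to vanish on. The sign conditions $\phi(C,\pm1)\gtrless 0$ transfer to $\psi(C,\pm1)\gtrless 0$ because at $y=\mp1$ the common value has the sign of all its coordinates. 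The existence of at least one such $y$ for every $C$ — i.e. that the graph projects onto $\mathcal K$ — follows from the fact that $Z\cap\{y=-1\}$ and $Z\cap\{y=+1\}$ both represent the nontrivial prime-power $0$-cycle over every $C$, so by connectivity of $Z$ along $C$ (the homological invariance of the slice intersection number modulo $p$) the common value must cross $0$. I then set $\psi(C,y)$ to be a continuous function vanishing exactly on this graph with the prescribed boundary signs, which is possible since $\mathcal K$ is a polyhedron and the graph is a closed sub-polyhedron separating $\mathcal K\times\{-1\}$ from $\mathcal K\times\{+1\}$.

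The main obstacle I anticipate is the last point: verifying that the pushed-forward cycle $Z$, together with the recorded common value, really does give a \emph{closed} graph in $\mathcal K\times[-1,1]$ that (i) surjects onto $\mathcal K$ and (ii) admits a defining function $\psi$ with the correct strict sign at $y=\pm1$. Surjectivity is the crux and is exactly where the modulo $p$ homological non-triviality of the prime-power solution set is used: one must argue that for each fixed $C$ the $0$-cycle $\tau^{-1}(0)\cap(\text{fiber over }C)$ is non-null-homologous for \emph{every} $y$ near $\pm1$, hence the common-value function cannot avoid $0$ as $y$ ranges over $[-1,1]$. Handling the perturbation/transversality carefully so that ``closed graph'' and ``separates the two ends'' both survive — and so that the resulting multivalued function is genuinely nice in the sense of the Definition — is the technical heart of the lemma; everything else is a routine adaptation of Section~\ref{section:bz}.
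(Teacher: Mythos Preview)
Your proposal oscillates between two approaches without settling on either, and this is where it loses rigor. Approach A: work with $\tau:P_p\times[-1,1]\to W_p$, look at the $1$-cycle $Z=\tau^{-1}(0)$ of ``equal values'', then track the common value and argue it crosses zero. Approach B: work directly with the full map to $\mathbb R^p$ (your $W_p\times\mathbb R$), so that the zero set already encodes \eqref{equation:equalized-p}. The paper uses Approach B from the start and never quotients to $W_p$.

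Your Approach A reasoning contains a concrete error: you assert that at $y=\pm1$ the cycle $Z$ ``has no boundary on either end'' because $\sigma$ points into the all-negative (resp.\ all-positive) orthant. That is false: the orthant is irrelevant to whether the coordinates \emph{agree}, and in fact the prime-power theorem applied at $y=-1$ guarantees $\tau^{-1}(0)\cap\{y=-1\}$ is nonempty (all $\phi(C_i,-1)$ can be equal and negative). So $Z$ is a relative cycle with genuine boundary on both ends; your subsequent ``connectivity of $Z$'' argument would then need to show the support of this relative cycle connects the two ends, which you do not do.

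What the paper supplies, and what your sketch is missing, is a direct test map for Approach B. Define
\[
\Phi_C:P_p\times[-1,1]\to\mathbb R^p,\qquad \Phi_C(x,y)=\bigl(\phi(C_1(x),y),\ldots,\phi(C_p(x),y)\bigr),
\]
and compare it equivariantly to $\Phi_0(x,y)=\Psi_0(x)+(y,\ldots,y)$, where $\Psi_0:P_p\to W_p$ is the standard test map from \cite{bz2014}. Since $\Phi_0^{-1}(0)=\Psi_0^{-1}(0)\times\{0\}$ is a nontrivial $G_p$-equivariant $0$-cycle mod $p$, so is $S_C=\Phi_C^{-1}(0)$ for transverse $\Phi_C$; and $S_C$ never touches $y=\pm1$ because $\phi(\cdot,\pm1)\neq0$. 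This single stroke replaces your entire common-value crossing argument.

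For separation, the paper's argument is cleaner than what you outline. Given any curve $\gamma:[a,b]\to\mathcal K\times[-1,1]$ from bottom to top, its $\mathcal K$-coordinate defines a family $C(s)$; the solution set over this family projects to a mod $p$ $1$-cycle in $[a,b]\times[-1,1]$ (relative to $\{a,b\}\times[-1,1]$) that meets every vertical $\{s\}\times[-1,1]$ nontrivially mod $p$. Since $\gamma$ is homologous to such a vertical, it meets the cycle too. Hence the projection $Z$ of $S$ to $\mathcal K\times[-1,1]$ separates top from bottom, and one sets $\psi$ to be the signed distance to $Z$. Note that ``surjects onto $\mathcal K$'' is too weak a property to make $\psi$ nice; separation is what is needed, and the passage through an arbitrary curve is exactly how the paper gets it.
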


\begin{proof}[Proof of Theorem \ref{theorem:main} assuming Lemma \ref{lemma:function-to-function}]
Decompose $m$ into primes, $m=p_1p_2\dots p_n$. Let $\phi_1$ be the perimeter single-valued function (or any continuous function of a convex body contained in $K$). Apply the lemma to $\phi_1$ and $p_1$ to obtain $\phi_2$. Then apply the lemma to $\phi_2$ and $p_2$ and so on. The final function $\phi_{n+1}$ will be a nice mutlivalued function of a convex body. 

From the intermediate value theorem, there exists $y\in (-1,1)$ such that 
\[
\phi_{n+1}(C, y) = 0
\]
for the convex body $C$ we are interested in. It means that $C$ may be partitioned into $p_n$ convex bodies of equal area and the same value $y$ of the multivalued function $\phi_n$. Each of these bodies may in turn be partitioned into $p_{n-1}$ parts of equal area and the same value $y$ of the multivalued function $\phi_{n-1}$, and so on. Eventually, we obtain a partition of $C$ into $m=p_1\cdots p_n$ parts of equal area and the same value $y$ of the multivalued function $\phi_1$, which is in fact the single-valued function that we need to equalize.
\end{proof}

\section{Proof of Lemma \ref{lemma:function-to-function}}
\label{section:lemma-proof}

As in Section~\ref{subsection:cstm}, we parametrize some of the partitions of $C\in\mathcal K_A$ into $p$ convex parts of equal area with the polyhedron $P_p$ of \cite{bz2014} (see Section~\ref{subsection:pk} for the list of the properties of $P_p$ that use below). Namely, to each $p$-tuple $(x_1,\ldots,x_p)=:x\in P_p\subset F_p(\mathbb R^2)$ we uniquely associate the weighted Voronoi partition of the plane,
\[
\mathbb R^2 = V_1(x)\cup \dots \cup V_p(x)
\]
with centers at $x_1,\ldots,x_p$ such that the areas of the intersections $C_i(x):=V_i(x)\cap C$ are all equal.

We use the group $G_p\subset\mathfrak S_p$, where $G_2=\mathfrak S_2$ and $G_p$ is the subgroup of even permutation in $\mathfrak S_p$ for odd $p$, as defined in Section~\ref{subsection:pk}. The group $G_p$ acts on $P_p$ freely, since it is a subset of the configuration space of $p$-tuples of pairwise distinct points in $\mathbb R^2$ and $G_p$ permutes those points. The group $G_p$ also acts on $\mathbb R^p$ by permutation of coordinates.

Consider the following $G_p$-equivariant map:

\[
\Phi : \mathcal K_A\times P_p \times [-1,1]\to \mathbb R^p,\quad \Phi(C, x, y) = \left( \phi(C_1(x), y), \phi(C_2(x), y), \ldots, \phi(C_p(x), y) \right).
\] 

\begin{claim}
\label{claim:Phi_is_continuous}
The map $\Phi$ is continuous. 
\end{claim}

While this paper was under review, this continuity claim was independently proved in~\cite[Section~6]{sadovek2023}. For completeness, we provide a proof that is shorter thanks to the following useful theorem.

\begin{theorem}[folklore]
\label{theorem:closed-graph}
Any map $f : X\to Y$ from a Hausdorff compact space $X$ to a Hausdorff compact space $Y$ whose graph is closed is continuous.
\end{theorem}
\begin{proof}
It is sufficient to check that the preimage of any closed $F\subseteq Y$ is closed. The set $X\times F$ is closed in $X\times Y$ and its intersection with the graph $\Gamma_f$ of $f$ is closed. Since $(X\times F)\cap \Gamma_f$ is a closed subset of the compact $X\times Y$ then it is itself compact. Then its projection to $X$ is also compact as a continuous image of a compact set, and is closed as a compact subset of a Hausdorff space. But this projection is $f^{-1}(F)$, which completes the proof.
\end{proof}

\begin{proof}[Proof of Claim~\ref{claim:Phi_is_continuous}]
Let us first explain how to each $p$-tuple $(x_1,\ldots,x_p)=:x\in P_p\subset F_p(\mathbb R^2)$ we associate a unique set of Voronoi weights $w:=(w_1,\ldots, w_p)$ so that the corresponding weighted Voronoi partition with centers $x$ and weights $w$ splits $C$ into equal area parts $C_1(x,w),\ldots, C_p(x,w)$.  This is explained in~\cite[Section~2]{ahk2014} by referring to general properties of monotone transportation, but let us show existence and uniqueness to make our argument more self-contained.

The Voronoi cells, intersected with $C$, are defined as
\begin{equation}
\label{equation:voronoi}
C_i(x,w) = \left\{z\in C\ |\ |z - x_i|^2 - w_i \le |z - x_j|^2 - w_j\ \forall j\neq i\right\}.
\end{equation}
They obviously depend continuously on $x$ and $w$ whenever as long as they have non-empty interiors, and their areas depend continuously on $x$ and $w$ always. Consider the simplex 
\[
\Delta^{p-1}=\{(t_1,\ldots, t_p)\ |\ t_1+\dots+t_p=1,\ t_1,\ldots,t_p\ge 0\}
\]
and a map $\pi : \Delta^{p-1}\to \Delta^{p-1}$ defined by letting
\[
\pi(t)=\frac{1}{\area C}\left( \area C_1(x, w(t)), \ldots, \area C_p(x, w(t))\right),
\]
where $w(t)$ is defined by $w_i = -1/t_i$, weight $w_i=-\infty$ being allowed and implying $C_i(x,w)=\emptyset$.

The map $\pi : \Delta^{p-1}\to\Delta^{p-1}$ is continuous and has the property that $\pi(t)_i = 0$ when $t_i=0$ (and $w_i = - \infty$). So $\pi$ maps faces of $\Delta^{p-1}$ to themselves and is homotopic to the identity map by the linear interpolation, that is a homotopy in the class of PL maps of the pair $(\Delta,\partial\Delta)$ to itself. It follows that the degree of $\pi$ as a map of the pair $(\Delta^{p-1}, \partial\Delta^{p-1})$ to itself is $1$ and therefore $\pi$ is surjective. So we have $\pi(t) = (1/p,\ldots,1/p)$ for some $t$ and therefore have some nonzero weights that correspond to equipartition of the area.

Let us show the uniqueness of the weights up to adding the same number to all of the weights. Assume that two combinations of weights $w$ and $w'$ equipartition the area. By adding the number $\max\{w'_i-w_i\ |\ i=1,\ldots,p\}$ to all the components of $w$ we may assume that $w_i = w'_i$ for some $i$ and $w_j\ge w'_j$ for any $j$. From these inequalities it follows that for the given $i$ and any $j\neq i$
\[
|z - x_i|^2 - w_i \le |z - x_j|^2 - w_j\Rightarrow |z - x_i|^2 - w'_i \le |z - x_j|^2 - w'_j,
\]
that is $C_i(x,w)\subseteq C_i(x,w')$ and therefore $\area C_i(x,w)\le \area C_i(x,w')$. Since by our assumption the areas of parts actually equal $\area C/p$, this implies $C_i(x,w) = C_i(x,w')$. If $C_i(x,w)$ and $C_j(x,w)$ are adjacent (share a common edge in the interior of $C$) then the line separating them is given by the following equation in $z$,
\[
(z - x_i)^2 - w_i = (z - x_j)^2 - w_j\Leftrightarrow 2 z \cdot (x_j-x_i) + x_i^2 - x_j^2 = w_i - w_j.
\]
A strict inequality $w_j>w'_j$ would show that $C_i(x,w')$ is strictly larger than $C_i(x,w)$. Thus $w_j=w'_j$ for all cells adjacent to $C_i(x,w)$. Using the fact that the graph of the parts is connected under the adjacency relation, we see that actually $w=w'$, establishing the uniqueness of weights.

Note that the Voronoi weights in our problem can be assumed to be taken from a compact domain $\Omega_p$. The Voronoi centers $x_i$ are chosen from the union of the compact images of the projections $P_p\hookrightarrow F_p(\mathbb R^2)\to \mathbb R^2$ of the configuration space subpolyhedra to their component points in the plane. That is, the centers are chosen from a compact subset of the plane. The separating hyperplane must intersect the original convex body $K$ of Theorem~\ref{theorem:main}, since it is a hyperplane separating two non-empty parts of its iterated partition, and hence some $z$ in the equation of the separating hyperplane belongs to the body $K$. Hence the differences $w_i-w_j$ are all bounded. Since the weights of a weighted Voronoi partition may be shifted and normalized to satisfy $w_1+\dots + w_p = 0$, it is sufficient to choose them from a compact subset $\Omega_p\subset \mathbb R^p$, for example, a sufficiently large cube.

Because $\phi$ is continuous, it is sufficient to check that the map 
\[
\mathcal K_A\times P_p\to \underbrace{\mathcal K_{A/p}\times\dots \times \mathcal K_{A/p}}_p
\]
given by the formula
\[
(C,x)\to C_1(x)\times\ldots\times C_p(x)
\]
is continuous. Recall that $\mathcal K_A$ and $\mathcal K_{A/p}$ are Haudorff compacts by the Blaschke selection theorem and $P_p$ is compact by definition. So, by Theorem~\ref{theorem:closed-graph} it is sufficient to check that the graph of this map is compact. The graph is the Cartesian projection of a subset $X\subseteq \mathcal K_A\times P_p\times \Omega_p\times \underbrace{\mathcal K_{A/p}\times\dots \times \mathcal K_{A/p}}_p$ to the space $\mathcal K_A\times P_p\times \underbrace{\mathcal K_{A/p}\times\dots \times \mathcal K_{A/p}}_p$ and is therefore compact itself if $X$ is compact. 

It remains to show that $X$ is compact. Let us write down its explicit  definition:
\begin{multline*}
X = \{ (C, x, w, C_1, \ldots C_p)\in \mathcal K_A\times P_p\times \Omega_p\times \underbrace{\mathcal K_{A/p}\times\dots \times \mathcal K_{A/p}}_p\ | \\
C_i = \left\{z\in C\ |\ |z - x_i|^2 - w_i \le |z - x_j|^2 - w_j\ \forall j\neq i \right\} \},
\end{multline*}
noting that the factor $\underbrace{\mathcal K_{A/p}\times\dots \times \mathcal K_{A/p}}_p$ in the definition means the equality of areas 
\[
\area C_1 = \dots = \area C_p = A/p>0. 
\]
For positive areas of the $C_i$ the expression $C_i = \left\{z\in C\ |\ |z - x_i|^2 - w_i \le |z - x_j|^2 - w_j\ \forall j\neq i \right\}$ is continuous in $(C, x, w)$, hence the equations in the definition of $X$ define a closed set $X$. Since $\mathcal K_A\times P_p\times \Omega_p\times \underbrace{\mathcal K_{A/p}\times\dots \times \mathcal K_{A/p}}_p$ is compact, then its closed subset $X$ is compact.
\end{proof}
 
Fix a body $C$ and consider the restriction of $\Phi$ to it
\[
\Phi_C = \Phi|_{\{C\}\times P_p \times [-1,1]}.
\]
Note that $\Phi_C$ satisfies the following boundary conditions because $\phi$ is a nice multivalued function:
\begin{equation}
\label{equation:boundary_conditions}
\Phi_C(P_p \times \{-1\})\subset \mathbb R^p_- \quad\text{and}\quad \Phi_C(P_p \times \{1\})\subset \mathbb R^p_+.
\end{equation}
Here $\mathbb R^p_-=\{(x_1, \ldots, x_p):\forall i\: x_i < 0\}$, and $\mathbb R^p_+=\{(x_1, \ldots, x_p):\forall i \: x_i > 0\}$.

\begin{claim}
\label{claim-nonzero-cycle}
Let $T : P_p \times [-1,1]\to \mathbb R^p$ be a transverse to zero $G_p$-equivariant PL-map which satisfies the boundary conditions \eqref{equation:boundary_conditions}. Then the set $T^{-1}(0)$ is finite and has number of $G_p$-orbits counted with signs not divisible by $p$.
\end{claim}

\begin{proof}
Let us first present an instance of a map $T_0 : P_p\times [-1,1] \to \mathbb R^p$, which is transverse to zero, $G_p$-equivariant, and satisfies the boundary conditions \eqref{equation:boundary_conditions}, and for which the set $T_0^{-1}(0)$ consists of $G_p$-orbits whose number counted with signs is not divisible by $p$. In order to produce $T_0$, we may take the $\mathfrak S_p$-equivariant transverse to zero map 
\[
\tau_0 : P_p \to W_p = \{(x_1,\ldots,x_p):x_1+\ldots+x_p=0\}\subset \mathbb R^p
\]
from Lemma~\ref{lemma:bu}, such that $\tau_0^{-1}(0)$ has number of $G_p$-orbits counted with signs not divisible by $p$.
We augment $\tau_0$ to the transverse to zero map (assuming the coordinates of the image of $\tau_0$ are in the interval $(-1,1)$)
\[
T_0(x,y) = \tau_0(x) + \left(y,\ldots,y\right).
\]
Then $T_0^{-1}(0) = \tau_0^{-1}(0)\times \{0\}$ and this preimage still has number of $G_p$-orbits counted with signs not divisible by $p$.

Now the statement of the lemma follows from Lemma~\ref{lemma:equivariant-euler} applied to $T$ and $T_0$. The domain here is $P_p\times[-1, 1]$ which is a pseudomanifold modulo $p$ relative to $P_p\times \{-1,1\}$. Both $T$ and $T_0$ satisfy boundary conditions \eqref{equation:boundary_conditions} and the homotopy defined by $h(x,y,t)=(1-t)T_0(x,y)+tT(x,y)$ has the property required in Lemma~\ref{lemma:equivariant-euler} because $h(P_p\times \{-1\}\times [0,1])\subset \mathbb R_-^p$ and $h(P_p\times \{1\}\times [0,1])\subset \mathbb R_+^p$ imply $h(P_p\times \{-1,1\}\times [0,1])\not\ni 0$.
\end{proof}

\begin{claim}
\label{claim:one-parametric}
Let $\widetilde T : P_p \times [-1,1] \times [a,b]\to \mathbb R^p$ be a $G_p$-equivariant continuous map such that $\widetilde T_t:=\widetilde T|_{P_p \times [-1,1] \times \{t\}}$ satisfies the boundary conditions \eqref{equation:boundary_conditions} for all $t$. Then the projection of the set $\widetilde T^{-1}(0)$ to the rectangle $[-1,1] \times [a,b]$ separates the right side $\{1\} \times [a,b]$ from the left side $\{-1\} \times [a,b]$.
\end{claim}

\begin{proof}
Let us first assume that $\widetilde T$ is PL and transverse to zero with respect to a product triangulation of $P_p\times [-1,1]\times [a,b]$. The polyhedron $P_p\times [-1,1]\times [a,b]$ then is a pseudomanifold modulo $p$ relative to the union of its three subpolyhedra $(P_p\times\{-1, 1\})\times [a,b]$, $(P_p\times[-1, 1])\times \{a\}$, and $(P_p\times[-1, 1])\times \{b\}$. Thanks to the boundary conditions 
\[
\widetilde T(P_p \times \{-1\} \times [a,b]) \subset \mathbb R_-^p\quad\text{and}\quad\widetilde T(P_p \times \{1\} \times [a,b])\subset \mathbb R_+^p
\]
we have $\widetilde T(P_p\times\{-1, 1\}\times [a,b])\not\ni 0$ and so Lemma~\ref{lemma:equivariant-homotopy} applies to $\widetilde T$. By the lemma, the preimage of zero $Z = \widetilde T^{-1}(0)$ is a one-dimensional oriented graph with a free action of $G_p$ preserving the orientation. The quotient $Z/G_p$ is again an oriented graph.

Let $F:Z/G_p\to [-1,1] \times [a,b]$ be the projection map, its image coincides with the image of the projection $Z\to [-1,1] \times [a,b]$. By a property of product triangulations, $F$ is PL, that is, is affine on every edge of $Z/G_p$.

By Lemma~\ref{lemma:thom}, for all but a finite number of values $t\in[a,b]$ the map $\widetilde T_t$ is transverse to zero. The graph $Z$ has a finite number of vertices. So, there are $t_0\in(a,b)$ and a small $\delta > 0$ such that for all values $t\in [t_0-\delta,t_0+\delta]$ the map $\widetilde T_t$ is transverse to zero, $(P_{p}\times[-1,1]\times [t_0-\delta, t_0+\delta])\cap Z$ contains no vertex of $Z$ and consists of several oriented line segments connecting $P_{p}\times[-1,1]\times \{t_0-\delta\}$ to $P_{p}\times[-1,1] \{t_0+\delta\}$.

Then every point $(z,t_0)\in \widetilde T_{t_0}^{-1}(0)$ is the intersection of such a line segment with $P_{p}\times [-1, 1]\times \{t_0\}$. The sign of the point $(z, t_0)$ in the preimage of zero $\widetilde T_{t_0}^{-1}(0)$ is $+1$ if the coordinate $t$ increases along the line segments and $-1$ otherwise, by Lemma~\ref{lemma:equivariant-homotopy}(ii) applied to the restriction of $\widetilde T$ to $P_{p}\times [-1, 1]\times [t_0-\delta, t_0]$ (see Remark~\ref{remark:truncated-cylinder} for the explanation why the lemma applies). By Claim~\ref{claim-nonzero-cycle} we know that the sum of signs of $G_{p}$-orbits of points in $\widetilde T_{t_0}^{-1}(0)$ is not divisible by $p$. 

Passing to the quotient by the action of $G_{p}$, we see that  $(P_{p}/G_p\times [-1, 1]\times [t_0-\delta, t_0+\delta])\cap Z/G_p$ consists of several oriented line segments connecting $P_{p}/G_p\times [-1, 1]\times \{t_0-\delta\}$ to $P_{p}/G_p\times [-1, 1]\times \{t_0+\delta\}$. Moreover, the number of segments along which $t$ increases minus the number of segments along which $t$ decreases is not divisible by $p$. 

By Lemma~\ref{lemma:equivariant-homotopy}(ii,iii), the graph $Z/G_p$ is a PL $1$-cycle modulo $p$ relative to $P_p/G_p \times [-1,1] \times \{a,b\}$ and $F$ is affine on its edges. Thus the image of the cycle's projection $\zeta=F(Z/G_{p})$ is a PL $1$-cycle modulo $p$ in the rectangle $[-1,1] \times [a,b]$ relative to its top and bottom sides $[-1,1] \times \{a,b\}$, and disjoint with its left and right sides $\{-1,1\} \times [a,b]$ by the boundary conditions.

Since the projection $F$ preserves the $t$ coordinate, each oriented line segment in $(P_{p}/G_p\times [-1, 1]\times [t_0-\delta, t_0+\delta])\cap Z/G_p$ is mapped by $F$ to a line segment in $[-1, 1]\times [t_0-\delta, t_0+\delta]$ connecting $[-1, 1]\times \{t_0-\delta\}$ to $[-1, 1]\times \{t_0+\delta\}$. Only these line segments of $\zeta$ intersect the line $\{t=t_0\}$ because the remaining part of $\zeta$ lies outside $[-1, 1]\times [t_0-\delta, t_0+\delta]$. The number of such line segments along which $t$ increases minus the number of line segments along which $t$ decreases is not divisible by $p$. Summing up, we obtain that the intersection number $\zeta\cdot \{t=t_0\}$ is not divisible by $p$. 

Consider a curve $\gamma$ passing from the left $\{-1\} \times [a,b]$ to the right $\{1\} \times [a,b]$ of the rectangle $[-1,1] \times [a,b]$. If $\gamma$ does not intersect the support of $\zeta$ then the same is true for its perturbation not touching the top and the bottom of the rectangle, so we assume that $\gamma$ does not touch the top and the bottom of the rectangle. Then $\gamma$ is homologous to a horizontal line relative to the left and right sides of the rectangle. By Lemma~\ref{lemma:intersection-mod-p}(ii,iii), applied to the cycles $\gamma$, $\zeta$ and the subsets $A=\{-1,1\}\times [a,b]$ and $B=[-1,1]\times\{a,b\}$, we have that 
\[
\zeta\cdot\gamma=\zeta\cdot \{t=t_0\}\neq 0\mod p
\] 
and $\gamma$ must intersect the support of $\zeta$. 

We have proved the claim for a transverse to zero $\widetilde T$, now consider the general case of a continuous map. Consider a curve $\gamma$ passing from the left $\{-1\} \times [a,b]$ to the right $\{1\} \times [a,b]$ of the rectangle $[-1,1] \times [a,b]$. Let $\Gamma = F^{-1}(\gamma)\subset P_p \times [-1,1] \times [a,b] $ be the compact preimage of this curve. Assume contrary to the statement of the claim that $\widetilde T(\Gamma)\not\ni 0$ and therefore 
\[
\dist(\widetilde T(\Gamma), 0) > \varepsilon,\quad \dist(\widetilde T(P_p \times \{-1\} \times [a,b]), \partial\mathbb R_-^p) > \varepsilon,\quad \dist(\widetilde T(P_p \times \{1\} \times [a,b]), \partial\mathbb R_+^p) > \varepsilon
\] 
for some $\varepsilon>0$. 

By Lemma~\ref{lemma:equivariant-approximation} we may approximate $\widetilde T$ with precision $\varepsilon$ by 
a $G_p$-equivariant transverse to zero map $\widetilde T'$, PL with respect to sufficiently fine product triangulation of $P_p\times [-1,1]\times [a,b]$. Then 
\[
\widetilde T'(\Gamma)\not\ni 0,\quad \widetilde T'(P_p \times \{-1\} \times [a,b]) \subset \mathbb R_-^p,\quad \widetilde T'(P_p \times \{1\} \times [a,b])\subset \mathbb R_+^p.
\] 
But we have already proved that the projection of $\widetilde T'^{-1}(0)$ to the rectangle intersects $\gamma$, which is equivalent to $\widetilde T'^{-1}(\Gamma)\ni 0$, a contradiction.
\end{proof}

Denote by $Z$ the projection  of the set $S=\Phi^{-1}(0)\subset\mathcal K_A\times P_p\times [-1,1]$ to $\mathcal K_A\times [-1,1]$.

\begin{claim}
The set $Z$ separates the top $\mathcal K_A\times \{1\}$ from the bottom $\mathcal K_A \times \{-1\}$ of the cylinder $\mathcal K_A\times [-1,1]$.
\end{claim}

\begin{proof}
Assume that we have a continuous curve
\[
\gamma : [a,b] \to \mathcal K_A\times [-1,1]
\] 
passing from the bottom $\mathcal K_A\times \{-1\}$ to the top $\mathcal K_A\times \{1\}$ in the cylinder and parameterized by a segment $[a,b]$. Call its coordinates $C(s)$ and $y(s)$, the latter passing from $-1$ to $1$.

Define  
\[
\widetilde\Phi:P_p\times [-1,1]\times [a,b] \to \mathbb R^p \quad \widetilde\Phi(x, y, s) =\Phi(C(s), x, y).
\]

The curve $(y(s), s)$ passes from the left to the right side of the rectangle $[-1,1]\times [a,b]$. So, by Claim~\ref{claim:one-parametric} applied to $\widetilde\Phi$ it is intersected by the projection of $\widetilde\Phi^{-1}(0)$. Which means that there is $x\in P_p$ such that for some $s$
\[
\widetilde\Phi(x,y(s),s) = 0.
\]

By definition of $\widetilde\Phi(x, y, s) =\Phi(C(s), x, y)$ this effectively means that $(C(s),x,y(s))\in S$ and $(C(s), y(s))\in Z$. Hence $\gamma$ intersects $Z$.
 
\end{proof}

Let us show that the separation property of $Z\subset \mathcal K_A\times [-1,1]$ allows us to consider it as a graph of a multivalued function.

\begin{claim}
A closed set $Y\subset \mathcal K_A\times (-1,1)$ which separates the top $\mathcal K_A\times \{1\}$ from the bottom $\mathcal K_A \times \{-1\}$ is the graph of a nice multivalued function from $\mathcal K_A$ to $[-1,1]$.
\end{claim}
\begin{proof}
Take the distance to the set $Y$ function, $\dist(\cdot, Y)$,  under some metrization of $\mathcal K_A\times [-1,1]$, it is continuous and positive on the complement of $Y$. Since the top and the bottom of $\mathcal K_A\times [-1,1]$ belong to different connected components of the complement, we can flip the sign of this function on the bottom component to make it positive on the top and negative on the bottom. In effect, we obtain a function
\[
\psi : \mathcal K_A\times [-1,1] \to \mathbb R,
\]
satisfying the boundary condition sufficient to call its corresponding multivalued function with the graph $\{\psi(C,y)=0\}=Y$ nice. 
\end{proof}

By the previous two claims there exists a nice multivalued function $\psi$ with graph $Z$.
Let us check that it satisfies the statement of Lemma~\ref{lemma:function-to-function}.

 Whenever $\psi(C, y) = 0$ we have $(C,y)\in Z$ (since $Z$ is the graph of $\psi$). Since $Z$ is the projection of $S$, there is $x\in P_p$ such that $(C,x,y)\in S$. This triple, in turn, provides a partition of $C$ into $p$ convex bodies $C_1,\ldots, C_p$ satisfying 
\[
\phi(C_1, y) = \dots = \phi(C_p, y)
\]
by the definition of $S$ and $\Phi$.

This finishes the proof of Lemma~\ref{lemma:function-to-function}.

\section{Appendix: Remarks on the polyhedron $P_p$}

\subsection{Explanation of the construction by Blagojevi\'c and Ziegler}
\label{section:bz-explanations}

The anonymous referees (for both this paper and \cite{avvakumov2020}) asked for more explanations on the structure of the polyhedron $P_m\subset F_m(\mathbb R^2)$. We encourage the interested readers to read \cite{bz2014} (having in mind the case $d=2$ there for simplicity), but we also outline the main idea under the construction of $P_m$ there.

One notices that the ordered $m$-tuples of pairwise distinct points in $\mathbb R^2$, which is the definition of $F_m(\mathbb R^2)$, may be classified as follows. First, we classify the projection sets to the $x$ axis, those are multisets of labeled points. Basically, the projections form several subsets of labels, ordered left to right, each subset consisting of points with the same projection. Then for every such subset we also trace the order of $y$ coordinates of its points before the projection.

\begin{figure}[ht]
\center
\includegraphics[width=150mm]{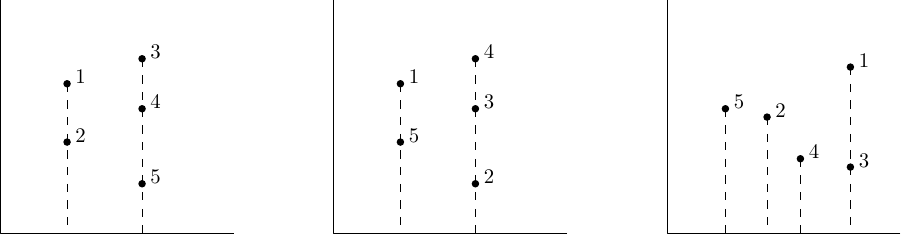}
\caption{The configurations of two $7$-dimensional unbounded cells and one $9$-dimensional unbounded cell of $F_5(\mathbb R^2)$, the 
$7$-cells belonging to the same $\mathfrak S_5$-orbit.}
\label{figure:pulling}
\end{figure}

Those combinatorial data define a decomposition of $F_m(\mathbb R^2)$ into convex cells, each given by a system of linear equations and strict linear inequalities. This is not a cellular decomposition in the usual sense because the cells are not closed, and closures of the cells are not compact. Note that the cells of the lowest dimension $m+1$ correspond to all $m$ points positioned on a vertical line. Hence there is a single $\mathfrak S_m$-orbit of these cells, $\mathfrak S_m C$, where $C$ is the cell of points satisfying $x_1=\dots=x_m$, $y_1<\dots<y_m$.

The construction of $P_m\subset F_m(\mathbb R^2)$ in \cite{bz2014} may be viewed as taking a $\mathfrak S_m$-invariant barycentric subdivision like the one we used in the proof of Lemma~\ref{lemma:thom}, considering sequences of cells whose closures form an inclusion-increasing sequence. This barycentric subdivision does not subdivide the whole $F_m(\mathbb R^2)$ but instead provides a polyhedron $P_m$ inside it. This procedure of passing to $P_m$ loses $m+1$ of the dimension because the lowest dimension of a cell was $m+1$. Finally one obtains the compact cells of $P_m$ of dimensions from $0$ to $m-1$ out of the initial unbounded cells of dimensions from $m+1$ to $2m$. 

Now we explain the properties of $P_m$ and the test map $f : P_m\to W_m$ stated in the beginning of Section~\ref{subsection:pk} and used in the proof of Lemma~\ref{lemma:bu}.

\begin{proof}[Construction of the test map $f$]
The test map $\widetilde f : F_m(\mathbb R^2)\to W_m$ corresponds to projecting the $m$ points to the line,
\[
((x_1,y_1),\ldots, (x_m,y_m)) \mapsto (x_1,\ldots, x_m),
\] 
and then sending them to the linear quotient $W_m=\mathbb R^m/\Delta$. The map $\widetilde f$ is just a surjective linear map and $\widetilde f^{-1}(0) = \mathfrak S_m C$. Hence its restriction $f = \widetilde f|_{P_m}$ has $f^{-1}(0)$ consisting of a single orbit of vertices $\mathfrak S_m v_0$ of $P_m$, the barycenters of the $\mathfrak S_m C$. 

This $f : P_m\to W_m$ is not transverse to zero in our definition, but a closer investigation shows that it satisfies the local homeomorphism assumptions of Lemma~\ref{lemma:local-degree} that we use in the proof of Lemma~\ref{lemma:bu} to produce a transverse to zero map $\tau_0$ from $f$. This is known from~\cite[Lemma~4.1]{bz2014}, but let us briefly explain this. Let $\Sigma$ be the star of $v_0$ in $P_m$. The vertices of $\Sigma$ correspond to the cells containing $C$ in their closures. The images of these cells under $f$ constitute a fan of convex cones in $\mathbb R^{2m}/C = W_m$ with apexes at the origin. Hence the barycentric subdivision of this fan is a PL ball $f(\Sigma)\subset \mathbb R^{2m}/C = W_m$ and $f$ maps $\Sigma$ to $f(\Sigma)$ PL homeomorphically. Hence $\Sigma$ may be chosen as $U_{v_0}$ in Lemma~\ref{lemma:local-degree} and $g\Sigma$ may be chosen as $U_{gv_0}$ for any $g\mathfrak S_m$.

Let us check the pseudomanifold modulo $p$ property of $P_m$ when $m=p^k$ with prime $p$. Every face of dimension $m-1$ of $P_m$ has precisely one vertex in $\mathfrak S_m v_0$, hence the stars $g\Sigma$, $g\in\mathfrak S_m$, of the vertices $gv_0$ cover the whole $P_m$. This covering by $g\Sigma$ can be extended to a ``cellular decomposition`` of $P_m$ (that we mention in Section~\ref{subsection:pk}), but the cells are PL balls rather than convex polytopes.
\end{proof}

\begin{proof}[Deduction of the pseudomanifold modulo $p$ property from Lemma~4.2 of \cite{bz2014}]
Choose an orientation of the star $\Sigma$ of $v_0$, then extend it to the whole $P_m$ by the requirement that $\mathfrak S_m$ flips the orientation with the sign of the permutation. In order to check the pseudomanifold modulo $p$ property, it remains to count the coefficients of codimension $1$ faces (or cells) in the boundary of $\sum_{g\in \mathfrak S_m} \sgn g\cdot g\Sigma$ viewed as an equivariant chain.

Lemma~4.2 of \cite{bz2014} asserts that the coboundary of an $\mathfrak S_m$-orbit of cells of $P_m$ of codimension $1$ (with a suitably chosen orientations related to the sign action of $\mathfrak S_m$) is divisible by $\binom{m}{j}$ for some $j=1,\ldots,m-1$, which is in turn divisible by $p$ when $m=p^k$ by Lucas' theorem \cite{lucas1878}. More precisely, by \cite[Lemma~4.2]{bz2014}, for a certain cell $T_j$ of codimension $1$ in $P_m$, the sum of coboundaries of its orbit satisfies
\[
\left\langle \sum_{g\in \mathfrak S_m} \sgn g \cdot d g T_j, \Sigma\right\rangle = 0\mod p.  
\]
Here $d$ is the coboundary operator, the natural product of (co)chains is defined on faces as $\langle F,G \rangle = 1$ when $F=G$ and $0$ otherwise. The orbits of these cells $T_j$, for $j=1,\ldots, m-1$, are all cells of codimension $1$ in the ``cellular decomposition'' of $P_m$.

Rewrite this to express in terms of the boundary operator $\partial$
\begin{multline*}
\sum_{g\in \mathfrak S_m} \sgn g \langle g dT_j, \Sigma\rangle =  \sum_{g\in \mathfrak S_m} \sgn g \langle d T_j, g^{-1}\Sigma\rangle = \\
= \sum_{g\in \mathfrak S_m} \sgn g \langle T_j, \partial g^{-1} \Sigma\rangle = \left\langle T_j, \partial \sum_{g\in \mathfrak S_m} \sgn g \cdot g^{-1} \Sigma\right\rangle = 0\mod p.  
\end{multline*}
Since all codimension $1$ cells of $P_m$ belong to orbits of such $T_j$ for $j=1,\ldots,m-1$, we obtain that $\sum_{g\in \mathfrak S_m} \sgn g\cdot g\Sigma_0$ has boundary divisible by $p$, which is precisely the pseudomanifold modulo $p$ property of $P_m$.
\end{proof}

\begin{proof}[A more direct explanation of the pseudomanifold modulo $p$ property]
Let us give a brief explanation of the above formulas and the origin of the cells $T_j$. In the cellular decomposition of $F_m(\mathbb R^2)$ (viewed as a subset of $\mathbb R^{2n}$) there is a unique orbit of the cell $C$ of the lowest dimension $m+1$, the $C$ being defined by
\begin{eqnarray*}
x_1 = \dots = x_m\\
y_1 < \dots < y_m.
\end{eqnarray*}
The cells of dimension $m+2$ belong to orbits of the cells $D_j$ for $j=1,\ldots, m-1$ defined by
\begin{eqnarray*}
x_1 = \dots = x_j < x_{j+1} = \dots = x_m\\
y_1 < \dots < y_j,\quad y_{j+1} < \dots < y_m.
\end{eqnarray*}
The boundary $\partial D_j$ then corresponds to a combination of $\binom{m}{j}$ elements of the form $\sgn g\cdot g C$, where $g$ is a permutation with subsequences $1,\ldots, j$ and $j+1,\ldots, m$ in the given order. Therefore in the cellular decomposition of $F_m(\mathbb R^2)$
\[
\sum_{g\in \mathfrak S_m} \sgn g \left\langle \partial D_j, g^{-1} C\right\rangle = \sum_{g\in \mathfrak S_m} \sgn g \left\langle g \partial D_j, C\right\rangle =
\binom{m}{j} = 0\mod p.
\]

Then one observes that the cell $\Sigma$ of $P_m$ identifies with a ball in the orthogonal complement to $C$ in the ambient $\mathbb R^{2n}$ space, since this is a barycentric subdivision of the partition of $F_m(\mathbb R^2)$ into convex cells. This generalized barycentric subdivision of the cells containing $C$ in their closures in fact corresponds to the more classical barycentric subdivision of the corresponding fan of cones in the linear quotient of $\mathbb R^{2n}$ by the linear span of $C$, triangulating a PL ball.

The $(m-2)$-dimensional cells $T_j$ of the ``cellular decomposition'' of $P_m$ are assembled from faces of $P_m$ that do not intersect the orbit $\mathfrak S_m v_0$. Each of such $(m-2)$-faces then intersects precisely one orbit of a vertex corresponding to a $(m+2)$-dimensional cell of $F_m(\mathbb R^2)$, so we may assume that $T_j$ is assembled from $(m-2)$-faces that do not intersect $\mathfrak S_m v_0$ and have one vertex, the barycenter of $D_j$. Similarly to $\Sigma$, $T_j$ is identified with a PL ball in the orthogonal complement to $D_j$ in $\mathbb R^{2n}$. 

Since the permutation group $\mathfrak S_m$ does not change the orientation of the ambient $\mathbb R^{2n}$, its action on the orientation of the cells in the orbit of $C$ corresponds to its action on the orientation of the cells in the orbit of $\Sigma$. Similarly, its action on the orientation of the cells in the orbit of $D_j$ corresponds to its action on the orientation of the cells in the orbit of $T_j$. The (co)boundary relations and sign calculations for the orbits of $\Sigma$ and the $T_j$ then correspond to those for the orbits of $C$ and $D_j$, because $\langle\partial \Sigma, g T_j\rangle = \langle C, \partial g D_j\rangle$.
\end{proof}

\subsection{Implicit construction of a useful polyheron in $F_p(\mathbb R^2)$}
\label{section:alt-explanations}

During the outlined above construction of $P_p$ one has to check the properties of $P_p$ that we used in our proof (see Section~\ref{section:bz} for the explicit list of the properties), and they are checked in \cite[Sections~3, 4]{bz2014}. At a more abstract level those properties may be guaranteed by a more general implicit consideration that we outline here. 

The proof of the Borsuk--Ulam-type theorem for $\mathfrak S_p$-equivariant maps $\Phi : F_p(\mathbb R^2)\to W_p$ both in \cite{ahk2014} and \cite{bz2014} establishes that such a map does not exist because the first cohomology obstruction $\xi\in H_{\mathfrak S_p}^{p-1}(F_p(\mathbb R^2); \pm \mathbb Z/p\mathbb Z)$ is nonzero. Here $\pm$ means the action of $\mathfrak S_p$ on the twisted coefficients by the permutation sign. This cohomology fact is 
\begin{itemize}
\item
Stated explicitly in \cite[Corollaries~4.5, 4.6]{bz2014}. 
\item
Implicitly follows from \cite{ahk2014}, where the proof of the Borsuk--Ulam-type theorem works in terms of locally finite homology that may produce cohomology using the intersection with the ordinary finite homology, thus resulting in establishing a nonzero cohomology obstruction.
\item
Follows from the spectral sequence calculation relating non-equivariant cohomology of $F_p(\mathbb R^2)$ to its $\mathfrak S_p$-equivariant cohomology, see for example \cite[Corollaries~3.5, 3.6]{cohen-taylor1993}. That calculation shows that rows from $1$ to $p-2$ in the spectral sequence cannot hit the Euler class of the representation $W_p$ in the bottom row of the spectral sequence, thus allowing it to remain nonzero in the equivariant cohomology of $F_p(\mathbb R^2)$. That is, $\xi$ is nonzero in $H^{p-1}(\mathfrak S_p; \pm \mathbb Z/p\mathbb Z)$ surviving on all stages of the spectral sequence.
\end{itemize}

Starting from the non-vanishing cohomology obstruction $\xi\in H_{\mathfrak S_p}^{p-1}(F_p(\mathbb R^2); \pm \mathbb Z/p\mathbb Z)$, obtained in one of the three ways, one may go in the opposite direction. Triangulate $F_p(\mathbb R^2)$ equivariantly (this is going to be an infinite triangulation of an open manifold lifted from its quotient $F_p(\mathbb R^2)/\mathfrak S_p$) and find a finite simplicial cycle $Q_p$ of this triangulation representing a homology class from $H^{\mathfrak S_p}_{p-1}(F_p(\mathbb R^2); \pm \mathbb Z/p\mathbb Z)$ on which the cohomology class $\xi$ evaluates to not divisible by $p$. This cycle $Q_p$ has the following properties:

\begin{itemize}
\item
$Q_p$ is supported on a finite union of faces of dimension $p-1$ and is invariant with respect to the action of the permutation group $\mathfrak S_p$ on $F_p(\mathbb R^2)$. 
\item
The $(p-1)$-dimensional cells of $Q_p$ are oriented and assigned coefficients in $\mathbb Z/p\mathbb Z$ so that $\mathfrak S_p$ acts on these orientations and coefficients by the sign of the permutation. This rephrases the property that $Q_p$ is an equivariant cycle with coefficients in $\pm\mathbb Z/p\mathbb Z$.
\item
The homological boundary of $Q_p$ is divisible by $p$ because $Q_p$ is a cycle.
\item
For any transverse to zero $\mathfrak S_p$-equivariant map $\Phi : Q_p\to W_p$ the point set $\Phi^{-1}(0)$ consists of a number of $\mathfrak S_p$-orbits, and this number counted with coefficients of the $(p-1)$-dimensional cells of $Q_p$ (using the orientation of $Q_p$ and $W_p$ similar to Definition~\ref{definition:point-sign})) is not divisible by $p$. This rephrases the fact that the obstruction class $\xi$ is not divisible by $p$ on $Q_p$. 
\end{itemize}

These properties are sufficient to use the support of $Q_p$ in place of $P_p$ and the cycle $Q_p$ with orientations and multiplicities in place of the pseudomanifold $P_q$ with orientations of the top-dimensional faces. For example, Lemmas~\ref{lemma:equivariant-homotopy} and \ref{lemma:equivariant-euler} only need to count the signs and edges of the oriented graph with multiplicities. Eventually, the following analogue of Lemma~\ref{lemma:bu} holds true.

\begin{lemma}
\label{lemma:bu-q}
For a prime $p$ $G_p=\mathfrak S_p$ for $p=2$ or the even permutation subgroup of $\mathfrak S_p$ for odd prime $p$. Then

i$)$ There exists a map $\tau_0 : Q_p\to W_p$ that is transverse to zero and has number of $G_p$-orbits of points in $\tau_0^{-1}(0)$ counted with signs not divisible by $p$. 

ii$)$ Any transverse to zero $G_p$-equivariant map $\tau : Q_p\to W_p$ has number of $G_p$-orbits of points in $\tau^{-1}(0)$ counted with signs not divisible by $p$.

iii$)$ Any continuous $G_p$-equivariant map $\tau : Q_p\to W_p$ contains $0$ in its image.
\end{lemma}

Similarly to our main approach, passing to the subgroup $G_p\subseteq \mathfrak S_p$ here allows to avoid using twisted coefficients. It is justified by the fact that the cohomology restriction map is injective on the modulo $p$ equivariant cohomology since the composition of the restriction and the transfer in the opposite direction is the multiplication by $|\mathfrak S_p/G_p|\neq 0\mod p$. 

Alternatively, one may note (as in the proof of a more general \cite[Lemma~5]{kar2009conf}) that the cohomology calculation in \cite[Corollaries~3.5, 3.6]{cohen-taylor1993} can be applied directly to the cyclic group $Z_p\subseteq \mathfrak S_p$ to observe that $H^i(F_p(\mathbb R^2); \mathbb Z/p\mathbb Z)$ (the coefficients are not twisted in this case) is a direct sum of free $\mathbb Z/p\mathbb Z[Z_p]$-modules for $i=1,\ldots, p-2$, that prevents nonzero maps of the corresponding rows of the spectral sequence to the bottom row.

\section{Appendix: A weaker higher-dimensional result}
\label{section:higher-pos}

Now we are going to consider the case when we work in $\mathbb R^d$, have $d-1$ measures $\mu_1,\ldots, \mu_{d-1}$ in a convex body $K$ and want to partition $K$ into $m$ convex parts of equal $\mu_j$ measure (for every $j$) and equal surface area. As with the perimeter, the ``surface area'' may be any continuous function of a convex body in $\mathbb R^d$. 

In Appendix \ref{section:higher-neg} we explain why our approach is not suitable when we want to equalize two arbitrary functions and $d-2$ measures of parts, which is why we only dare to handle one arbitrary function here. Let us state the result:

\begin{theorem}
\label{theorem:main-d}
Let $d\ge 2$. Assume $d-1$ finite nonzero Borel measures $\mu_1,\ldots,\mu_{d-1}$ with non-negative density $($that is absolutely continuous with respect to the Lebesgue measure$)$ are given in a convex body $K\subset\mathbb R^d$ and $f$ is a continuous function of a convex body. If $m\ge 2$ is an integer then it is possible to partition $K$ into $m$ convex parts $V_1,\ldots, V_m$ so that for every $i$
\[
\mu_i(V_1) = \mu_i(V_2) = \dots = \mu_i(V_m)
\]
and 
\[
f(V_1) = f(V_2) = \dots = f(V_m).
\]
\end{theorem}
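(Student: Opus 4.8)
The plan is to repeat the proof of Theorem~\ref{theorem:main} almost verbatim, replacing the plane $\mathbb{R}^2$ by $\mathbb{R}^d$, the single additive quantity ``area'' by the collection of $d-1$ additive quantities $\mu_1,\ldots,\mu_{d-1}$, and ``perimeter'' by the given function $f$. As in the one-measure case, the standard compactness argument lets us assume every $\mu_i$ has strictly positive density, so that all convex bodies occurring in the (finite-dimensional) subspace of $\mathcal{K}$ that is actually used have positive and Hausdorff-continuous $\mu_i$-measures, and the same holds for the restrictions of the $\mu_i$ to the sub-bodies produced in the recursion. With positive densities in hand, the statement of Lemma~\ref{lemma:function-to-function} is to be re-read with ``equal area'' replaced throughout by the conditions $\mu_1(C_1)=\dots=\mu_1(C_p)$, \ldots, $\mu_{d-1}(C_1)=\dots=\mu_{d-1}(C_p)$; I claim it then holds with the identical proof, once we have the right replacement for the polyhedron $P_p$.

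So the one genuinely new ingredient is a $(p-1)$-dimensional polyhedron $P_p$ with a free action of the group $G_p$ of even permutations, whose quotient $P_p/G_p$ is a pseudomanifold modulo $p$, together with a $G_p$-equivariant continuous rule assigning to each convex $C\subset\mathbb{R}^d$ and each $x\in P_p$ a partition $C=C_1(x)\cup\dots\cup C_p(x)$ into convex bodies with $\mu_i(C_1(x))=\dots=\mu_i(C_p(x))$ for every $i$, and an $\mathfrak{S}_p$-equivariant test map $\Psi_0:P_p\to W_p$, transverse to zero, whose preimage $\Psi_0^{-1}(0)$ is a single $\mathfrak{S}_p$-orbit forming a nontrivial $G_p$-equivariant $0$-cycle modulo $p$. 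This is exactly what the higher-dimensional parts of \cite{ahk2014} and \cite{bz2014} supply: the space parametrizing equipartitioning convex partitions of $C$ for the $d-1$ measures is constructed and analysed there, and after quotienting by $G_p$ it yields the required pseudomanifold modulo $p$ of dimension $p-1$ together with the test map $\Psi_0$ (for $p=2$, $P_2$ is a circle with the antipodal $G_2$-action, exactly as in Section~4). Granting this, the map $\Phi:\mathcal{K}\times P_p\times[-1,1]\to\mathbb{R}^p$, $\Phi(C,x,y)=(\phi(C_1(x),y),\ldots,\phi(C_p(x),y))$, its one-parameter version $\widetilde\Phi$, the transversality and pseudomanifold arguments showing that $S_C$ is a nontrivial $G_p$-equivariant $0$-cycle modulo $p$, the augmented test map $\Phi_0(x,y)=\Psi_0(x)+(y,\ldots,y)$, the separation claims, and the $\dist(\cdot,Z)$ construction of the new nice multivalued function $\psi$ all carry over word for word.

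Finally, Theorem~\ref{theorem:main-d} follows by iterating this lemma exactly as Theorem~\ref{theorem:main} follows from Lemma~\ref{lemma:function-to-function}: write $m=p_1\cdots p_n$, put $\phi_1:=f$ rescaled into $(-1,1)$, apply the lemma successively to $(\phi_1,p_1),(\phi_2,p_2),\ldots$ to obtain nice multivalued functions $\phi_2,\ldots,\phi_{n+1}$, choose by the intermediate value theorem a $y$ with $\phi_{n+1}(K,y)=0$, and unwind the recursion. Since each $\mu_i$ is additive, the $m$ parts obtained in the end all have $\mu_i$-measure $\mu_i(K)/m$ for every $i$, while the common value of $\phi_1=f$ on them equals $y$. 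The main obstacle is the first step: verifying that the higher-dimensional $P_p$ genuinely has all the listed properties --- in particular that its quotient by even permutations is a pseudomanifold modulo $p$ of dimension $p-1$ and that the test map of \cite{bz2014} has the claimed single-orbit transverse preimage --- and checking that the assignment $C\mapsto(C_1(x),\ldots,C_p(x))$ can be made continuous even for merely non-negative densities, which is precisely where the compactness reduction enters. Everything after that is a routine transcription of Section~4.
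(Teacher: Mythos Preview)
Your overall architecture is right, but the ``one genuinely new ingredient'' you describe does not exist in the form you state it. There is no fixed $(p-1)$-dimensional polyhedron $P_p$, independent of $C$, together with a continuous rule $x\mapsto(C_1(x),\ldots,C_p(x))$ that already equalizes \emph{all} $d-1$ measures; this is not what \cite{ahk2014} or \cite{bz2014} construct. What they provide is a $(d-1)(p-1)$-dimensional polyhedron $P_{p;d}\subset F_p(\mathbb R^d)$ (a pseudomanifold modulo $p$ after dividing by $G_p$) and, for a \emph{single} measure, the weighted Voronoi assignment that equalizes that one measure. Even for $p=2$ your dimension count is already off once $d>2$: halving $d-1$ measures by a hyperplane requires a $(d-1)$-sphere of directions, not a circle.

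The paper's proof of Lemma~\ref{lemma:function-to-function-d} therefore does not quotient out the measure constraints in advance. It works on the full $P_{p;d}$, equalizes only $\mu_{d-1}$ by the choice of Voronoi weights, and packages the remaining $d-2$ measures together with $\phi$ into a map
\[
\Phi_C : P_{p;d}\times[-1,1]\longrightarrow \mathbb R^{(d-1)p},\qquad
\Phi_C(x,y)=\bigl(\mu_i(C_j(x));\ \phi(C_j(x),y)\bigr)_{\substack{1\le i\le d-2\\ 1\le j\le p}},
\]
whose domain and the relevant target have the same dimension $(d-1)(p-1)+1$, so that the transverse preimage is again a $0$-cycle modulo $p$. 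The test map is built the same way, adding the measure coordinates of the Blagojevi\'c--Ziegler test map to $\Psi_0$. Once you make this correction, your Claim~\ref{claim-nonzero-cycle}, the one-parameter $\widetilde\Phi$, the separation argument, and the $\dist(\cdot,Z)$ construction indeed go through verbatim, and your deduction of Theorem~\ref{theorem:main-d} from the lemma is correct.
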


In \cite{soberon2012} a similar result was proved, in the case of $d$ measures and no arbitrary function. In terms of the previous section this is explained as follows: In the induction step we equalize $d$ measures in $p_1$-tuples of parts of the bottom level of the hierarchical partition, but we do not need to work with ``multivalued functions'' because the measures are additive and once we equalize the measures we know the single common value. 

As for the proof, our $d$-dimensional theorem follows from the following analogue of Lemma \ref{lemma:function-to-function}. Let $\mathcal K^d_M$ be the space of $d$-dimensional convex bodies contained in a fixed convex body and having $\mu_1$-measure at least $M$.

\begin{lemma}
\label{lemma:function-to-function-d}
Assume $\phi$ is a nice multivalued function of $\mathcal K^d_{M/p}$, $p$ is a prime, $\mu_1,\ldots,\mu_{d-1}$ are locally finite Borel measures with non-negative density $($that is absolutely continuous with respect to the Lebesgue measure$)$.

Then there exists another nice multivalued function $\psi$ of $\mathcal K^d_M$ such that whenever $C\in\mathcal K^d_M$ satisfies 
\[
\psi(C, y) = 0
\]
then there exists a partition $C = C_1\cup \dots \cup C_p$ into convex bodies such that for every $i=1,\ldots, d-1$,
\[
\mu_i(C_1) = \dots = \mu_i(C_p)
\] 
and 
\begin{equation}
\label{equation:equalized-p-d}
\phi(C_1, y) = \dots = \phi(C_p, y) = 0.
\end{equation}
\end{lemma}

The proof follows by considering the more general $(d-1)(p-1)$-dimensional pseudomanifolds modulo a prime $p$, $P_{p;d}\subset F_p(\mathbb R^d)$, introduced in \cite{bz2014}, with the group of symmetry $G_p$ as in the previous section. The map $\Phi_C : P_{p;d}\times [-1,1] \to \mathbb R^{(d-1)p}$ is then built from a configuration $x\in P_{p;d}$ of $p$ points in $\mathbb R^d$, considered as Voronoi centers. The measure $\mu_{d-1}$ is equalized by finding appropriate Voronoi weights and establishing a partition $C=C_1\cup\dots\cup C_p$.

The functions $\mu_i(C_j) - \frac{1}{p} \mu_i(C)$ ($i=1,\ldots, d-2$) and $\phi(C_i, y)$, $y\in [-1,1]$, then constitute the coordinates of $\Phi_C : P_{p;d}\times [-1,1] \to \mathbb R^{(d-1)p}$. Whenever such $\Phi_C$ is transverse to zero, its preimage of zero (where the measures $\mu_1,\ldots,\mu_{d-2}$ are equalized in the common sense and $\phi$ is equalized as a multivalued function) is has number of points counted with signs not divisible by $p$. This is a version of Claim \ref{claim-nonzero-cycle} in this more general situation.

The rest of the proof of Lemma \ref{lemma:function-to-function-d} is essentially the same as the proof of Lemma \ref{lemma:function-to-function}. Theorem \ref{theorem:main-d} follows as in the two-dimensional case, the measures are equalized since on every prime number stage the partition is a partition into parts of equal measures, the function $\phi$ is equalized as guaranteed by the lemma.
\begin{remark}
Of course, we were trying to find a generalization of this argument in order, for example, to equalize two arbitrary functions of the convex parts in $\mathbb R^3$ together with their volumes. A crucial obstacle, in our opinion, is that when we make an induction step and consider a ``subfunction'' of a multivalued function with a separation argument, then the procedure of restoring the subpartition (of a part in the hierarchy) corresponding to the chosen common value of this equalized function of the subpartition is not continuous. In particular the other function we want to equalize may not depend continuously (or be a nice multivalued function) on the first one after this choice.
\end{remark}

\section{Appendix: Difficulty of equalizing two arbitrary functions}
\label{section:higher-neg}

In this section we point out some essential difficulties in the attempt to generalize our technique to the case when we need to equalize at least two arbitrary continuous functions of convex parts. We thank Sergey Melikhov for sharing with us his ideas that developed into the argument of this section.

Assume that we have a convex body $K\subset\mathbb R^3$ and want to partition it into $m=2p^s$ ($p$ is an odd prime) convex parts with equal volumes, and equal values of two other functions $F_1, F_2$ of the parts continuous in Hausdorff metric. We would naturally start by partitioning $K$ into two parts of equal volume; such partitions are parametrized by the normal of the oriented partitioning plane, that is by the sphere $S^2$. Then in the part of $K$ the normal vector points to, we would apply the Blagojevi\'c--Ziegler argument with a polyhedral configuration space for $m=p^s$ to have a number of solutions for this half of the problem not divisible by $p$. Looking at the possible pairs of common values of $F_1,F_2$ we would obtain, as in the proof of the  main result of this paper, a multivalued function $S^2\to \mathbb R^2$, whose graph in $S^2\times \mathbb R^2$, under certain genericity assumptions, could be viewed as a $2$-dimensional cycle modulo $p$, which we denote by $Z$, homologous modulo $p$ to $k[S^2\times \{(0,0)\}]$ for some $k\neq 0\mod p$.

The problem would be solved this way if we could prove that under the antipodal map $\sigma : S^2\to S^2$, extended to $S^2\times \mathbb R^2$ by the trivial action of $\sigma$ on $\mathbb R^2$, some point of the support of $Z$ would go to some other point of the support of $Z$. But below we build an example of a modulo $p$ cycle $Z$ that satisfies all the assumptions that we know it must satisfy in the problem, but has disjoint $Z$ and $\sigma(Z)$.

Let us build $Z$ inside $S^2\times D$, where $D$ is the unit disk in the plane $\mathbb R^2$. Let us split $S^2$ by its equator $S^1$ into closed hemispheres $D_+$ and $D_-$. Start by building the part of $Z$ that lies over $S^1$: Let $L$ be the graph of 
\[
z\mapsto z^n,
\]
where we identify $D$ with the unit disk in the complex plane and $S^1$ with the unit complex numbers. For odd $n$ the circles $L$ and $\sigma L$ do not intersect and their linking number (if we consider the solid torus $S^1\times D$ lying standardly in $\mathbb R^3$) is $\lk (L, \sigma L) = n$, since for odd $n$ the circle $\sigma L$ is the graph of
\[
z\mapsto - z^n,
\]
and the linking number of two circles, close to each other, equals the winding number of their difference vector when we pass along the circles.

Letting this $n$ be equal to the prime number $p$ from the formula $m=2p^s$, we thus have that $L$ and $\sigma L$ are non-linked $1$-dimensional modulo $p$ cycles. Now we pass from the torus $S^1\times D$ to the topological $4$-dimensional ball $B^4 = D_+\times D$. The torus $S^1\times D$ is a part of its boundary $S^3=\partial B^4$ and the cycles $L$ and $\sigma L$ are non-linked modulo $p$ cycles in $S^3$, since the torus embeds into $S^3$ without a twist. It follows that we may choose two $2$-dimensional modulo $p$ cycles in $B^4$ relative to $S^3$, $M$ and $N$, so that $\partial M = L$, $\partial N = \sigma L$, and the supports of $M$ and $N$ are disjoint. 

Indeed, choose $M$ as any topologically embedded disk in $B^4$, whose boundary maps homeomorphically to the circle $L$. By Alexander duality for the pair $(B^4, S^3)$, we have
\[
H_1(B^4\setminus M; \mathbb Z/p\mathbb Z) = H^2(M, L; \mathbb Z/p\mathbb Z) = \mathbb Z/p\mathbb Z.
\]
Hence the homology class $[\sigma(L)]\in H_1(B^4\setminus M; \mathbb Z/p\mathbb Z)$ is fully determined by an element of $\mathbb Z/p\mathbb Z$, which is in fact the linking number $\lk(L, \sigma L)$. Having this linking number $0$ modulo $p$, we may conclude that $\sigma L$ is a modulo $p$ boundary of a $2$-dimensional modulo $p$ chain $N$ in $B^4\setminus M$. The chains $N$ and $M$ thus have disjoint supports.

Now we pass to $S^2\times D$ from $D_+\times D$ and take the $2$-dimensional modulo $p$ cycle $Z = M - \sigma(N)$, this is indeed a cycle, since 
\[
\partial Z = \partial M - \sigma(\partial N) = L - \sigma(\sigma(L)) = 0.
\]
From the construction of $M$ and $N$ we may conclude that $Z$ and $\sigma Z$ are disjoint. At the same time, $Z$ is homologous modulo $p$ to $[S^2\times \{(0,0)\}]$, which is equivalent to saying that it intersects $\{x\}\times D$, for generic $x\in S^2$, $1$ modulo $p$ number of times, counted with signs. The last claim is evidently true for $x\in S^1$, where 
\[
(\{x\}\times D) \cap Z = (\{x\}\times D)\cap L.
\]
Our construction of $M$ and $N$ allows them to have collars near $S^3\subset B^4$ that allows us to keep the uniqueness of such an intersection for $x$ in a neighborhood of $S^1$ in $S^2$. If we want $Z$ to be homologous to a multiple $k[S^2\times \{(0,0)\}]$ modulo $p$ then we may just repeat this construction in $k$ smaller disks $D_1,\ldots,D_s$ embedded in $D$ and take the sum of the obtained cycles.

Thus we have checked that $Z$ has the properties that a graph of the multivalued function from our attempted proof must have, but does not allow to make the final step of the proof.

\begin{remark}
Using several circles $L_i$, given by $z\mapsto c_i + \epsilon z^{n_i}$ for different $c_i\in D$, odd integers $n_i$, and sufficiently small $\epsilon>0$, it is possible to replace $L$ in the above argument with an algebraic combination $L' = \sum_i L_i$, such that 
\[
\lk (L', \sigma L') = \sum_i n_i = 0
\] 
as an integer. We may also make $L'$ modulo $p$ (but not integrally!) homologous to $k[S^1\times \{(0,0)\}]$, by choosing the number of the $L_i$ to equal $k$ modulo $p$. Then we choose $M$ as an oriented surface in $B^4 = D_+\times D$ with boundary $L'$, $N$ as a integral chain in $B^4\setminus M$ with boundary $\sigma(L')$. The integral chain $Z = M - \sigma(N)$ then becomes an integral cycle, modulo $p$ (but not integrally!) equivalent to $k[S^2\times \{(0,0)\}]$. And $Z$ is disjoint from $\sigma(Z)$, that is the Borsuk--Ulam theorem cannot be generalized to the corresponding multivalued map $S^2\to\mathbb R^2$.
\end{remark}

\bibliography{../Bib/karasev}

\bibliographystyle{abbrv}
\end{document}